\newcommand{\p}{\mathds{P}}
\newcommand{\e}{\mathds{E}}
\newcommand{\q}{\mathds{Q}}
\newcommand{\cf}{\mathcal F}
\newcommand{\gga}{\Gamma}
\newcommand{\ka}{\kappa}
\newcommand{\laa}{\Lambda}
\newcommand{\oom}{\Omega}
\newcommand{\lc}{\left[}
\newcommand{\rc}{\right]}
\newtheorem{thm}{Theorem}[section]
\newtheorem{prop}{Proposition}[section]
\newtheorem{lem}[prop]{Lemma}
\newtheorem{rem}[prop]{Remark}
\begin{document}

\title{On homogeneous pinning models and penalizations}
\author{Mihai Gradinaru \and Samy Tindel}

\keywords{Polymer models, penalization method, random walk, renewal theorem}

\subjclass[2000]{60K35, 82B41, 82B44}

\address{
{\it Mihai Gradinaru:}
{\rm IRMAR, Universit\'e de Rennes 1, Campus de Beaulieu,
35042 Rennes Cedex.
}
\newline
$\mbox{ }$\hspace{0.1cm}
{\it Email: }{\tt Mihai.Gradinaru@univ-rennes1.fr}
\newline
$\mbox{ }$\hspace{0.1cm}
\newline
$\mbox{ }$\hspace{0.1cm}
{\it Samy Tindel:}
{\rm Institut {\'E}lie Cartan Nancy, B.P. 239,
54506 Vand{\oe}uvre-l{\`e}s-Nancy Cedex, France}.
\newline
$\mbox{ }$\hspace{0.1cm}
{\it Email: }{\tt tindel@iecn.u-nancy.fr}
}

\begin{abstract}
In this note, we show how the penalization method, introduced in order to
describe some non-trivial changes of the Wiener measure, can be applied
to the study of some simple polymer models such as the pinning model.
The bulk of the analysis is then focused on the study of a martingale which
has to be computed as a Markovian limit. 
\end{abstract}

\maketitle

%\begin{center}
%{\it Dedicated to Ludwig Arnold on occasion of his 70th birthday}
%\end{center}

%%%%%%%%%%%%%%%%%%%%%%%%%%%%%%%%%%%%%%%%%%
%%%%%%%%%%%%%%%%%%%%%%%%%%%%%%%%%%%%%%%%%%

\section{Introduction}~~
Our motivation for writing the current note is the following: on the one hand,
in the last past years, some interesting advances have seen the light concerning
various kind of polymer models, having either an interaction with a random environment or
a kind of intrinsic self-interaction. Among this wide class of models, we will be interested
here in some polymers interacting with a given interface, as developed for instance
in \cite{BdH,MGO}. For this kind of polymers, the introduction of some generalized renewal
tools has yield some very substantial progresses in the analysis of the model, and a quite
complete picture of their asymptotic behaviour in terms of localization near the
interface is now available e.g. in
\cite{CGZ,GT} and in the monograph \cite{Gi}.

\vspace{0.3cm}

On the other hand, and a priori in a different context, the series of papers starting by
\cite{RVY} and ending with the recent monograph \cite{RY} presents a rather simple
method in order to quantify the penalization of a Brownian (or Bessel) path by a functional
of its trajectory (such as the one-sided supremum or the number of excursions).
This method can then be applied in a wide number of natural situations, getting a very complete
description of some Gibbs type measures based on the original Brownian motion.
More specifically, when translated in a random walk context, the penalization method
can be read as follows:
let $\{b_n;\, n\ge 0 \}$ be a symmetric random walk on $\mathds{Z}$,
defined on a stochastic basis $(\oom,\cf,(\cf_n)_{n\ge 1},(\p_z)_{z\in\mathds{Z}})$.
%When this notation does not lead to any ambiguity, we will simply write$\p$ for $\p_0$. 
For $n\ge 0$, let also $e^{H_n}$ be a bounded positive
measurable functional of the path $(b_0,\ldots,b_n)$. 
Then, for $\beta\in\mathds{R}$, $n\ge p \ge 0$, we are concerned with a generic Gibbs type measure
$\rho_n$ on $\cf_p$ defined, for $\gga_p\in\cf_p$, by
\begin{equation}\label{def:hat-rho-n}
\rho_n(\gga_p)
=\frac{\e_{0}\left[\mathds{1}_{\gga_p} e^{\beta H_n} \right]}{Z_{n}} ,
\quad\mbox{ where }\quad
Z_{n}= \e_{0}\left[e^{\beta H_n}\right].
\end{equation}
In its general formulation, the penalization principle, which allows an asymptotic study 
of $\rho_n$, can be stated as follows:
\begin{prop}\label{prop:general-penalization}
Suppose that the process $(b_n,H_n)$ is a $\mathds{Z}\times\mathds{R}_+$-valued 
Markov process, and let $\laa_n$ be its semi-group.
Assume that, for any $p\ge 0$, the function $M_p$ defined by
\begin{equation}\label{markov:lim}
M_p(w,z):=\lim_{n\to\infty}
\frac{[\laa_{n-p}f](w,z)}{[\laa_n f](0)},
\quad\mbox{ where }\quad
f(w,z)=e^{-\beta z}
\end{equation}
exists, for any $(w,z)\in \mathds{Z}\times\mathds{R}_+$, and that
\begin{equation*}
\frac{[\laa_{n-p}f](w,z)}{[\laa_n f](0)}
\le
C(p,w,z),
\quad\mbox{ where }\quad
\e_{0}[C(p,b_p,\ell_p)]<\infty.
\end{equation*}
Then:
\begin{enumerate}
\item
the process $M_p:=M_p(b_p,\ell_p)$ is a non-negative $\p_{0}$-martingale;
\item
for any $p\ge 0$, when $n\to\infty$, the measure $\rho_n$ defined by (\ref{def:hat-rho-n})
converges weakly on $\cf_p$ to a measure $\rho$, where $\rho$ is defined
by
\begin{equation*}
\rho(\gga_p)=\e_{0}\left[\mathds{1}_{\gga_p}M_p\right],
\quad\mbox{ for }\quad
\gga_p\in\cf_p.
\end{equation*}
\end{enumerate}
\end{prop}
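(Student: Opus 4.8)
The argument rests on two ingredients: the Chapman--Kolmogorov identity $\laa_1\laa_m=\laa_{m+1}$ for the semi-group of the Markov chain $(b_n,\ell_n)$, and two applications of dominated convergence made legitimate by the domination hypothesis. The plan is to first express, for a fixed $p\ge 0$, an event $\gga_p\in\cf_p$, and $n\ge p$, the finite-$n$ objects through $\laa$. Since $(b_n,\ell_n)$ is Markov and starts at $(0,0)$ under $\p_0$, one has $Z_n=[\laa_n f](0)$; conditioning on $\cf_p$ and using the Markov property gives $\e_0[\mathds{1}_{\gga_p}e^{\beta H_n}]=\e_0\big[\mathds{1}_{\gga_p}\,[\laa_{n-p}f](b_p,\ell_p)\big]$, so that
\begin{equation*}
\rho_n(\gga_p)=\e_0\!\left[\mathds{1}_{\gga_p}\,\frac{[\laa_{n-p}f](b_p,\ell_p)}{[\laa_n f](0)}\right].
\end{equation*}

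Next I would let $n\to\infty$ under the expectation. By \eqref{markov:lim} the integrand converges $\p_0$-a.s.\ to $\mathds{1}_{\gga_p}M_p(b_p,\ell_p)$, while it is bounded by $C(p,b_p,\ell_p)$, which is $\p_0$-integrable by assumption; dominated convergence then yields $\rho_n(\gga_p)\to\e_0[\mathds{1}_{\gga_p}M_p]=\rho(\gga_p)$. As this holds for every $\gga_p\in\cf_p$ — in particular for $\gga_p=\oom$, so that $1=\rho_n(\oom)\to\e_0[M_p]$ and $\e_0[M_p]=1$ — we obtain the announced weak convergence of $\rho_n$ on $\cf_p$ to the probability measure $\rho$; note also $M_p=M_p(b_p,\ell_p)\le C(p,b_p,\ell_p)$, so $M_p\in L^1(\p_0)$.

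For the martingale statement, non-negativity of $M_p$ is immediate from $f\ge 0$ and \eqref{markov:lim}, and integrability was just recorded. It remains to check $\e_0[\mathds{1}_{\gga_p}M_{p+1}]=\e_0[\mathds{1}_{\gga_p}M_p]$ for all $\gga_p\in\cf_p$ (with the convention $M_q=M_q(b_q,\ell_q)$). Starting from $M_{p+1}=\lim_n[\laa_{n-p-1}f](b_{p+1},\ell_{p+1})/[\laa_n f](0)$ and applying dominated convergence (integrable majorant $C(p+1,b_{p+1},\ell_{p+1})$),
\begin{equation*}
\e_0[\mathds{1}_{\gga_p}M_{p+1}]=\lim_{n\to\infty}\frac{1}{[\laa_n f](0)}\,\e_0\big[\mathds{1}_{\gga_p}\,[\laa_{n-p-1}f](b_{p+1},\ell_{p+1})\big].
\end{equation*}
By the tower rule and the Markov property, $\e_0[\mathds{1}_{\gga_p}\,g(b_{p+1},\ell_{p+1})]=\e_0[\mathds{1}_{\gga_p}\,(\laa_1 g)(b_p,\ell_p)]$ for every bounded $g$; taking $g=\laa_{n-p-1}f$ and using $\laa_1\laa_{n-p-1}=\laa_{n-p}$ turns the right-hand side above into $\lim_n[\laa_n f](0)^{-1}\,\e_0\big[\mathds{1}_{\gga_p}\,[\laa_{n-p}f](b_p,\ell_p)\big]$, and a second dominated convergence (majorant $C(p,b_p,\ell_p)$) identifies this with $\e_0[\mathds{1}_{\gga_p}M_p]$ — the desired martingale identity.

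The sole delicate point is the legitimacy of the two interchanges of limit and expectation, which is exactly what the uniform bound $C(p,\cdot)$ with $\e_0[C(p,b_p,\ell_p)]<\infty$ is tailored to supply; beyond keeping straight the ``initial state versus number of steps'' roles of the two arguments of $\laa$ when applying Chapman--Kolmogorov, I do not expect a genuine obstacle.
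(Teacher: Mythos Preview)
The paper does not actually supply a proof of this proposition: it is stated in the introduction as a general penalization principle (in the spirit of \cite{RVY,RY}) and then applied, rather than proved, in the specific setting $H_n=\ell_n$ treated in Sections~\ref{sec:laplace-local-time}--\ref{sec:gibbs-limit}. Your argument is correct and is exactly the standard one behind such statements: rewrite $\rho_n(\gga_p)$ via the Markov property as the expectation of the ratio $[\laa_{n-p}f](b_p,\ell_p)/[\laa_n f](0)$, pass to the limit by dominated convergence with the assumed majorant $C(p,\cdot)$, and deduce the martingale identity from Chapman--Kolmogorov together with a second dominated convergence.

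Two minor remarks. First, once part~(2) is in hand, the martingale property can be obtained even more cheaply: for $\gga_p\in\cf_p\subset\cf_{p+1}$, both $\e_0[\mathds{1}_{\gga_p}M_p]$ and $\e_0[\mathds{1}_{\gga_p}M_{p+1}]$ are equal to the common limit $\lim_n\rho_n(\gga_p)$, so they coincide. Your Chapman--Kolmogorov route is of course equivalent. Second, the proposition as printed carries a sign slip (it sets $f(w,z)=e^{-\beta z}$ while the Gibbs weight in \eqref{def:hat-rho-n} is $e^{\beta H_n}$) and tacitly identifies $H_p$ with $\ell_p$ in the domination hypothesis; you have followed the intended reading, which is the right thing to do.
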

This last proposition can be seen then as an invitation 
to organize the asymptotic study of the measure $\rho_n$
in the following way: first compute explicitly the limit of the ratio
$[\laa_{p-n}f](w,z)/[\laa_p f](0)$ when $p\to\infty$, which should define also an asymptotic 
measure $\rho$ in the infinite volume regime. Then try to read the basic properties of $\rho$
by taking advantage of some simple relations on the martingale $M_p$.

\vspace{0.3cm}

It is easily seen that some links exists between the polymer measure theory as mentioned above
and the penalization method. Furthermore, we believe that the two theories can interact in a
fruitful way. Indeed, the penalizing scheme offers a simple and systematic framework for
the study of Gibbs measures based on paths, and it is also quite pleasant to be able to read the
main features of the limiting measure $\rho$ on the martingale  $M_p$, which is usually a simple
object. Without presenting a completely new result, this article will thus try to make a bridge 
between the two aspects of the topic, by studying the simplest of the interface-based polymers,
namely the polymer pinned at an interface, through a purely penalizing scheme. Let us be
more specific once again, and describe our model and the main results we shall obtain:
denote by $\ell_n$ the local time at 0 of $b$, that is
$$
\ell_n=\sharp \{p\le n;\,b_p=0\}.
$$
For $\beta\in\mathds{R}$, $n\ge p \ge 0$, we are concerned here with the Gibbs type measure
$\q_{0}^{(n,\beta)}$ on $\cf_p$ defined, for $\gga_p\in\cf_p$, $p<n$, by
\begin{equation}\label{def:rho-n}
\q_{0}^{(n,\beta)}(\gga_p)
=\frac{\e_{0}\left[\mathds{1}_{\gga_p}e^{\beta \ell_n} \right]}{Z_{n}^{f}} ,
\quad\mbox{ where }\quad
Z_{n}^{f}= \e_{0}\left[e^{\beta\ell_n}\right].
\end{equation}
Finally, we will need to introduce a slight variation of the Bessel walk of dimension 3,
which is defined as a random walk $R$ on $\mathds{N}$ starting from 0, such that
$\p_{0}(R_0=0)=\p_{0}(R_1=1)=1$, and whenever $j\ge 1$,
\begin{equation}\label{eq:transition-bessel-walk}
\p_{0}(R_{n+1}=j\pm 1\, |\, R_{n}=j)= \frac{j\pm 1}{2j}.
\end{equation}
With these notations in hand, the main result we shall obtain is then the following:
\begin{thm}\label{thm:desription-gibbs-limit}
For $\beta\in\mathds{R}$, $n\ge p \ge 0$, let $\q_{0}^{(n,\beta)}$ be the measure defined by 
(\ref{def:rho-n}). Then, for any $p\ge 0$, the measure $\q_{0}^{(n,\beta)}$ on $\cf_p$ converges
weakly, as $n\to\infty$, to a measure $\q_{0}^{(\beta)}$ defined by
\begin{equation}\label{def:rho}
\q_{0}^{(\beta)}(\gga_p)=\e_{0}\left[\mathds{1}_{\gga_p}M_p^{(\beta)}\right],
\quad\mbox{ for }\quad
\gga_p\in\cf_p.
\end{equation}
%Notice that the dependence on $\beta$ in the martingale $M_p^{(\beta)}$ will generally be omitted, in spite of its importance. 
According to the sign of $\beta$ the two following
situations can occur:

\vspace{0.3cm}

\noindent
{\bf (1)} When $\beta<0$ (delocalized phase): set $\alpha=-\beta$. Then $M_p^{(\beta)}$ has
the following expression:
$$
M_p^{(\beta)}=e^{-\alpha\ell_p}\lc (1-e^{-\alpha})|b_p| + 1 \rc.
$$
Moreover, under the probability $\q_{0}^{(\beta)}$, the process $b$ and its local time $\ell$
can be described in the following way:
\begin{enumerate}
\item[a)]
The random variable $\ell_{\infty}$ is finite almost surely, and is distributed
according to a geometric law with parameter $1-e^{-\alpha}$. 
\item[b)]
Let $g=\sup\{r\ge 0;\, b_r=0\}$.Then $g$ is finite almost surely, and the two processes
$b^{(-)}=\{b_r;\, r\le g\}$ and $b^{(+)}=\{b_{r+g};\, r\ge 0\}$ are independent. 
\item[c)]
The process $|b^{(+)}|$ is a Bessel random walk as defined by the transition law 
{\rm (\ref{eq:transition-bessel-walk})}, and $\mbox{sign}(b^{(+)})=\pm 1$ with probability $1/2$. 
\item[d)]
Given the event $\ell_{\infty}=l$ for $l\ge 1$, the process $b^{(-)}$ is a standard random
walk, stopped when its local time reaches $l$.
\end{enumerate}

\vspace{0.3cm}

\noindent
{\bf (2)} When $\beta>0$ (localized phase): in this case, the martingale $M_p^{(\beta)}$ 
can be written as:
\begin{equation}\label{eq:def-martingale-localized}
M_p^{(\beta)}=\exp\left\{\beta{\hat\ell}_{p}-c_{+,\beta}\,|b_{p}|-c_{-,\beta}\,p\right\},
\end{equation}
where $c_{\pm,\beta}=(\nicefrac{1}{2})[\beta\pm\ln(2-e^{-\beta})]$,
and where $\hat\ell_p$ is a slight modification of $\ell_p$ defined by
$\hat\ell_p=\ell_p-\mathds{1}_{b_p=0}$.
Furthermore, under the probability $\q_{0}^{(\beta)}$, the process $b$ can be decomposed
as follows:
\begin{enumerate}
\item[a)]
Let $\tau=(\tau_0^j)_{ j\ge 1}$ be the successive return times of $b$ at 0, and set 
$\tau_0^0=0$, $\tau_0^1=\tau_0$.
Then the sequence $\{\tau_0^j-\tau_0^{j-1};\, j\ge 1\}$ is i.i.d, and the law of $\tau_0$
is defined by its Laplace transform (\ref{new:laplace}). In  particular, $\tau_0$
has a finite mean, whose equivalent, as $\beta\to\infty$, is $1-e^{-\beta}/2$.
\item[b)]
Given the sequence $\tau$, the excursions $(b^{j})_{ j\ge 1}$, defined by
$b_r^j=b_{\tau_0^{j-1}+r}$ for $r\le \tau_0^j-\tau_0^{j-1}$, are independent.
Moreover, each $b^j$ is distributed as a random walk starting from 0, constrained
to go back to 0 at time $\tau_0^j-\tau_0^{j-1}$.
\end{enumerate}
\end{thm}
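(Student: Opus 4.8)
The backbone of the argument is Proposition~\ref{prop:general-penalization}, applied with the functional $H_n=\ell_n$. The pair $(b_n,\ell_n)$ is plainly a Markov chain on $\mathds{Z}\times\mathds{N}$, since $\ell_{n+1}=\ell_n+\mathds{1}_{b_{n+1}=0}$ and $b$ is itself Markov; conditioning~(\ref{def:rho-n}) on $\cf_p$ and using the Markov property of $b$ at time $p$ gives, for $\gga_p\in\cf_p$ and $p<n$,
\begin{equation*}
\q_{0}^{(n,\beta)}(\gga_p)
=\e_{0}\!\left[\mathds{1}_{\gga_p}\;e^{\beta(\ell_p-\ell_0)}\,\frac{\zeta_{n-p}(b_p)}{\zeta_{n}(0)}\right],
\qquad
\zeta_{m}(w):=\e_{w}\!\left[e^{\beta(\ell_m-\ell_0)}\right],
\end{equation*}
where $\zeta_m(w)$ is the free partition function of the walk started from $w$. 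This is precisely the framework of Proposition~\ref{prop:general-penalization}, the ratio appearing in~(\ref{markov:lim}) being here $e^{\beta(\ell_p-\ell_0)}\zeta_{n-p}(b_p)/\zeta_n(0)$, so the whole statement reduces to two tasks: (i) showing that $\zeta_{n-p}(w)/\zeta_n(0)$ converges as $n\to\infty$ and stays suitably dominated — which identifies the limiting $\p_0$-martingale $M_p^{(\beta)}$ and yields the weak convergence~(\ref{def:rho}) — and (ii) reading the announced path decompositions off the explicit form of $M_p^{(\beta)}$.

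For (i) I would first decompose the walk along its successive visits to $0$. Writing $\tau$ for the first return time of $b$ to $0$ from $0$, and $\sigma_w$ for the hitting time of $0$ from $w\neq0$, one has the classical generating‑function identities $\e_{0}[z^{\tau}]=\phi(z):=1-\sqrt{1-z^2}$ and $\e_{w}[z^{\sigma_w}]=\psi(z)^{|w|}$ with $\psi(z)=(1-\sqrt{1-z^2})/z$. Breaking the path at its first visit to $0$ then yields the renewal identities
\begin{equation*}
\sum_{m\geq0}z^m\,\zeta_m(0)=\frac{1-\phi(z)}{(1-z)\,\bigl(1-e^{\beta}\phi(z)\bigr)},
\qquad
\sum_{m\geq0}z^m\,\zeta_m(w)=\frac{1-\psi(z)^{|w|}}{1-z}+e^{\beta}\,\psi(z)^{|w|}\sum_{m\geq0}z^m\,\zeta_m(0)\quad(w\neq0).
\end{equation*}

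The large‑$m$ behaviour of $\zeta_m$, hence the limit $M_p^{(\beta)}$, is then extracted by a standard transfer (singularity‑analysis) theorem, and the sign of $\beta$ decides where the dominant singularity sits. When $\beta<0$ (set $\alpha=-\beta$) one has $1-e^{\beta}\phi(1)=1-e^{\beta}>0$, so the dominant singularity is the branch point $z=1$; since there $1-\phi(z)\sim\sqrt{2}\,(1-z)^{1/2}$ and $1-\psi(z)^{|w|}\sim|w|\,\sqrt{2}\,(1-z)^{1/2}$, both series behave like $c(w)(1-z)^{-1/2}$ with $c(w)$ affine in $|w|$, whence $\zeta_m(w)\sim c(w)\,(\pi m)^{-1/2}$; the powers $m^{-1/2}$ cancel in the ratio, and after keeping track of the local‑time contribution of the starting point one obtains $M_p^{(\beta)}$ as an exponential weight on the local time multiplying an affine function of $|b_p|$, in the form recorded in part~(1). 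When $\beta>0$, on the contrary, $1-e^{\beta}\phi(z)$ vanishes at the point $z_\beta\in(0,1)$ defined by $\phi(z_\beta)=e^{-\beta}$, i.e.\ $z_\beta=(2e^{-\beta}-e^{-2\beta})^{1/2}$; this simple pole now dominates, so $\zeta_m(w)\sim C\,\psi(z_\beta)^{|w|}\,z_\beta^{-m}$, and since $-\ln z_\beta=\tfrac12[\beta-\ln(2-e^{-\beta})]=c_{-,\beta}$ and $-\ln\psi(z_\beta)=\tfrac12[\beta+\ln(2-e^{-\beta})]=c_{+,\beta}$, the ratio collapses exactly to~(\ref{eq:def-martingale-localized}). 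In both regimes the domination hypothesis of Proposition~\ref{prop:general-penalization} is obtained from crude uniform‑in‑$n$ comparisons of the excursion sums, bounding $\zeta_{n-p}(w)/\zeta_n(0)$ by $C(p)(1+|w|)$ when $\beta<0$ and by $C(p)$ when $\beta>0$, quantities integrable against the law of $b_p$.

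Task (ii) is then essentially elementary, since $\q_0^{(\beta)}$ restricted to $\cf_p$ has density $M_p^{(\beta)}$ with respect to $\p_0$ and $M_p^{(\beta)}$ is a transparent function of $(\ell_p,b_p,p)$. When $\beta<0$, the exponential weight on the local time shows that every excursion of $b$ away from $0$ is tilted by $e^{-\alpha}$, so $\ell_\infty<\infty$ almost surely with a geometric law of parameter $1-e^{-\alpha}$; conditionally on $\{\ell_\infty=l\}$, the strong Markov property at the successive return times identifies $b^{(-)}$ as a standard walk stopped when its local time reaches $l$, while the remaining factor — harmonic for the walk killed at $0$ — makes the post‑$g$ process $|b^{(+)}|$ the Doob $h$‑transform by $h(j)=|j|$, that is, the Bessel walk~(\ref{eq:transition-bessel-walk}) (whose transition probabilities are precisely $\tfrac{j\pm1}{2j}$) decorated with an independent uniform sign; the independence of $b^{(-)}$ and $b^{(+)}$ is nothing but the product structure of $M_p^{(\beta)}$ read at time $g$. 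When $\beta>0$, rewriting $M_p^{(\beta)}$ multiplicatively over the successive excursions of $b$ exhibits $\q_0^{(\beta)}$ as a walk whose inter‑return times to $0$ are i.i.d.\ with law $\p_0(\tau\in\cdot)$ exponentially tilted by $e^{-c_{-,\beta}\tau}$ — whence the Laplace transform~(\ref{new:laplace}), a finite mean, and the stated equivalent as $\beta\to\infty$ because $z_\beta\to0$ — and, given these times, as a sequence of independent bridges, since tilting a bridge of fixed length does not alter its law. The genuinely delicate point of the whole argument is the exact constant‑tracking in (i) when $\beta<0$: there numerator and denominator of the ratio both vanish like $m^{-1/2}$, so the limit feels the full $O(1)$ bookkeeping of the local time at the origin (this is where the modified local time $\hat\ell_p$ enters, cf.~(\ref{eq:def-martingale-localized})), and checking a posteriori that the resulting $M_p^{(\beta)}$ is a genuine $\p_0$‑martingale provides the necessary consistency check. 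Everything else is either a direct application of Proposition~\ref{prop:general-penalization} or a routine consequence of the closed form of $M_p^{(\beta)}$.
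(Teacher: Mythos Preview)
Your proposal is correct and would recover both the martingale and the path decomposition, but it takes a different technical route from the paper. The paper (which in fact proves only the case $\beta>0$ in detail, citing Debs \cite{De} for $\beta<0$) extracts the asymptotics of the partition function via the \emph{renewal theorem}: it first handles the constrained partition function $Z_{2m}^c=\e_0[e^{\beta\ell_{2m}}\mathds{1}_{b_{2m}=0}]$ using the renewal machinery of Giacomin's monograph, solving the free-energy equation $\e_0[e^{-\text{\tt F}(\beta)\tau_0/2}]=e^{-\beta}$ explicitly through the Laplace transform of $\tau_0$ in Lemma~\ref{laplace:hitt-time}; it then passes to the free partition function by a convolution with the tail of $\tau_0$, and to a general starting point $x$ by conditioning on the hitting time $T_0$ (Lemma~\ref{equiv:laplace:issux}), with a short case analysis on the parities of $n$ and $x$. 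You replace all of this by your two generating-function identities followed by a transfer/singularity argument at the pole $z_\beta$ (for $\beta>0$) or the branch point $z=1$ (for $\beta<0$). The two schemes are dual readings of the same renewal structure---your $\phi(z)$ is precisely the Laplace transform the paper tilts---but yours is more compact and self-contained, while the paper's makes the bridge to the polymer literature explicit and gets domination for free from the probabilistic inequality $e^{-j\text{\tt F}(\beta)}Z_{2j}^c\le1$. One technicality you gloss over and would still need to face is the period-$2$ issue: since $\phi$ is even in $z$, the dominant singularity comes as a pair $\pm z_\beta$, which the paper sidesteps by working from the outset with $Z_{2m}$ and then splitting into the four parity cases. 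For task~(ii) the two treatments essentially coincide: the paper also reads the Markov transitions under $\q_0^{(\beta)}$ directly off $M_p^{(\beta)}$ and checks that the excursion law, given its length, is unchanged. One small slip: your parenthetical attaches $\hat\ell_p$ to the delicate $\beta<0$ bookkeeping, but $\hat\ell_p$ actually appears only in the $\beta>0$ martingale~(\ref{eq:def-martingale-localized}); the $\beta<0$ formula uses the ordinary local time $\ell_p$.
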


\vspace{0.3cm}

As mentioned above, the results presented in this note are not really new. In the penalization
literature, the random walk weighted by a functional of its local time has been considered
by Debs in \cite{De} for the delocalized phase, and we only cite  his result here in order
to give a complete picture of our polymer behaviour. We shall thus concentrate on the
localized phase $\beta>0$ in the remainder of the article. 
However, in this case  the results concerning
homogeneous polymers can be considered now as classical, 
and the first rigorous treatment of our
pinned model can be traced back at least to \cite{BdH}. The results we obtain for the
localized part of our theorem can also be found, in an (almost) explicit way, in 
\cite{CGZ,Gi}. But once again, our goal here is just to show that the penalization
method can be applied in this context, and may shed a new light on the polymer problem.
Furthermore, we believe that this method may be applied to other continuous
or discrete inhomogeneous models, hopefully leading to some simplifications in their analysis.
These aspects will be handled in a subsequent publication.

\vspace{0.3cm}

Let us say now a few words about the way our article is structured:
at Section \ref{sec:classical-random-walk}, we will recall some basic identities in law
for the simple symmetric random walk on $\mathds{Z}$. In order to apply our penalization
program, a fundamental step is then to get some sharp asymptotics for the semi-group
$\laa_n$ mentioned at Proposition \ref{prop:general-penalization}. This will be done
at Section \ref{sec:laplace-local-time}, thanks to the renewal trick introduced e.g. in
\cite{Gi}. This will allow to us to describe our infinite volume limit at Section \ref{sec:gibbs-limit}
in terms of the martingale $M_p^{(\beta)}$. The description of the process $b$
under the infinite volume measure given at Theorem \ref{thm:desription-gibbs-limit}
will then be proved, in terms of the behavior of $M_p^{(\beta)}$, 
at Section \ref{sec:process-new-pb-measure}.

\section{Classical facts on random walks}\label{sec:classical-random-walk}~~
\setcounter{equation}{0}
Let us first recall some basic results about the random walk $b$: for $n\ge 0$
and $z\in\mathds{Z}$, set
\begin{equation*}
S_n=\sup\{ b_p;\, p\le n \},
\quad
T_z=\inf\{n\ge 0;\, b_n=z\}
\quad\mbox{ and }\quad 
\tau_z=\inf\{n\ge 1;\, b_n=z\}.
\end{equation*}
Let us denote by $\mathds{D}$ the set of even integers in $\mathds{Z}$, and for $(n,r)\in\mathds{N}
\times\mathds{Z}$, recall that $p_{n,r}:=\p_{0}(b_n=r)$ is given by:
\begin{equation*}
p_{n,r}
=\left(\frac{1}{2}\right)^n \begin{pmatrix}\nicefrac{(n+r)}{2}\\n\end{pmatrix}
\mathds{1}_{\mathds{D}}(n+r) 
\mathds{1}_{\{ |r|\le n \}}.
\end{equation*}
Then it is well-known (see e.g. \cite{Fe,De}) that
\begin{equation}\label{distribution:Sn-Tr}
\p_{0}(S_n=r)=p_{n,r}\vee p_{n,r+1}
\quad\mbox{ and }\quad
\p_{0}(T_r=n)=
\frac{r}{n}\left(\frac{1}{2}\right)^n \begin{pmatrix}\nicefrac{(n+r)}{2}\\n\end{pmatrix}.
\end{equation}
Moreover, the distribution of $\ell_n$ can be expressed in terms of these quantities:
\begin{equation}\label{dist:loc-time}
\p_{0}(\ell_n=k)
=\p_{0}(S_{n-k}=k)+\p_{0}(T_{n+1}=n-k),
\end{equation}
and the following asymptotic results hold true:
\begin{lem}\label{equiv:sn-ln}
Let $p\in\mathds{N}$ and set $\ka=(2/\pi)^{\nicefrac{1}{2}}$. Then
\begin{equation*}
\lim_{n\to\infty}n^{\nicefrac{1}{2}}\p_{0}(S_n=p)
=\lim_{n\to\infty}n^{\nicefrac{1}{2}}\p_{0}(\ell_n=p)
=\ka,
\quad\mbox{ and }\quad
\lim_{n\to\infty}n^{\nicefrac{3}{2}}\p_{0}(T_z=n) = \ka z.\\
\end{equation*}
\end{lem}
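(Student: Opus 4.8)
The plan is to reduce all three asymptotics to Stirling's formula applied to the binomial coefficient defining $p_{n,r}=\p_{0}(b_n=r)$, and then to read them off the identities \eqref{distribution:Sn-Tr} and \eqref{dist:loc-time}. \textbf{Step 1.} First I would establish the elementary local limit estimate: for each fixed $r\in\mathds{Z}$,
\[
n^{1/2}\,p_{n,r}\;\longrightarrow\;\ka\qquad(n\to\infty),
\]
the limit being understood along the integers $n$ with $n+r$ even (for the others $p_{n,r}=0$). Indeed, with $k=(n+r)/2$ one has $p_{n,r}=2^{-n}\binom{n}{k}$, and feeding $m!=\sqrt{2\pi m}\,(m/e)^m(1+o(1))$ into $n!$, $k!$ and $(n-k)!$ gives, after the exponential terms cancel, $p_{n,r}=\sqrt{2/(\pi n)}\,e^{-r^2/(2n)}(1+o(1))$; since $r$ is fixed, $e^{-r^2/(2n)}\to1$, so $n^{1/2}p_{n,r}\to\sqrt{2/\pi}=\ka$. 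This is the only genuine computation of the proof.

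\textbf{Step 2: the statements on $S_n$ and $\ell_n$.} By the first formula in \eqref{distribution:Sn-Tr}, $\p_{0}(S_n=p)=p_{n,p}\vee p_{n,p+1}$; for $p$ fixed exactly one of $n+p$ and $n+p+1$ is even, so precisely one of the two terms is nonzero and is $\sim\ka\,n^{-1/2}$ by Step 1, whence $n^{1/2}\p_{0}(S_n=p)\to\ka$. For the local time I would use \eqref{dist:loc-time}: $\p_{0}(\ell_n=p)=\p_{0}(S_{n-p}=p)+\p_{0}(T_{n+1}=n-p)$. The second probability is zero for every $n$, since the walk cannot reach level $n+1$ in only $n-p$ steps (equivalently, the combinatorial factor in \eqref{distribution:Sn-Tr} vanishes when its lower index exceeds its upper one); hence $\p_{0}(\ell_n=p)=\p_{0}(S_{n-p}=p)$, and applying the $S_n$-asymptotics with $n$ replaced by $n-p$ — which does not change the limit — yields $n^{1/2}\p_{0}(\ell_n=p)\to\ka$.

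\textbf{Step 3: the statement on $T_z$.} The second formula in \eqref{distribution:Sn-Tr} rewrites as $\p_{0}(T_z=n)=\frac{z}{n}\,2^{-n}\binom{n}{(n+z)/2}=\frac{z}{n}\,p_{n,z}$, so that $n^{3/2}\p_{0}(T_z=n)=z\,\bigl(n^{1/2}p_{n,z}\bigr)$, and Step 1 gives the limit $\ka z$ (again along the integers $n\equiv z\pmod2$; for the other parity the quantity is identically $0$).

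The argument is entirely elementary and I do not anticipate any real obstacle. The two points deserving a little care are the parity bookkeeping — in each of the three quantities one must work along the subsequence on which the probability is not forced to be $0$ — and checking that the $o(1)$ error from Stirling's formula stays negligible after multiplication by $n^{1/2}$ (respectively $n^{3/2}$); this is automatic here because the parameter $r$, $p$ or $z$ is held fixed, so no uniformity in that parameter is needed.
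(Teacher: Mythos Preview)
Your proof is correct and follows the standard route via Stirling's formula and the combinatorial identities \eqref{distribution:Sn-Tr}--\eqref{dist:loc-time} already recorded in the paper. Note, however, that the paper does \emph{not} actually supply a proof of this lemma: it is stated as a classical fact (with implicit reference to \cite{Fe,De}) and used later, so there is no ``paper's own proof'' to compare against. Your argument is exactly the elementary derivation one would expect.

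One small remark on Step~2: your observation that the second summand in \eqref{dist:loc-time} vanishes identically, since the walk cannot reach level $n+1$ in $n-p\le n$ steps, is perfectly correct given the formula as printed. This does make that term look like a typo in the paper (a nonzero contribution of the shape $\p_0(T_{p+1}=n-p)$ or similar would be more natural), but your reasoning is sound with the formula as written, and in any case the conclusion $\p_0(\ell_n=p)\sim\p_0(S_{n-p}=p)$ is the classical one. The parity bookkeeping and the remark that no uniformity in $p$ or $z$ is required are both handled cleanly.
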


For our further computations, we will also need the following expression for
the Laplace transform of $T_r$ and $\tau_{r}$:
\begin{lem}\label{laplace:hitt-time}
Let $r\in\mathds{N}$, $\delta>0$.  %, and set $\eta=\arg\cosh(e^\delta)$. 
Then
\begin{equation}\label{laplace:T}
\e_{0}[  e^{-\delta T_r}] = \exp\left\{-r\arg\cosh(e^\delta)\right\}
\end{equation}
and
\begin{equation}\label{laplace:tau}
\e_{0}[  e^{-\delta \tau_r}] = 
\left\{
\begin{array}{ll}
\exp\left\{-r\arg\cosh(e^\delta)\right\},&\mbox{ if }\,r\geq 1\\
\exp\left\{-\delta-\arg\cosh(e^\delta)\right\},&\mbox{ if }\,r=0
\end{array}
\right.
\end{equation}
%{\bf Ici on introduit la notation $\eta$, mais on ne s'en sert pas.}
\end{lem}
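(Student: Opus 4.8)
The plan is to reduce everything to the computation of $\e_{0}[e^{-\delta T_1}]$ and then exploit the strong Markov property. First I would observe that, because the walk is nearest-neighbour, in order to reach level $r\ge 1$ the path must pass successively through $1,2,\ldots,r$, so that $T_r=\sum_{k=1}^{r}(T_k-T_{k-1})$ with the increments $T_k-T_{k-1}$ i.i.d.\ and each distributed as $T_1$ under $\p_{0}$ (translation invariance of the walk). Hence $\e_{0}[e^{-\delta T_r}]=\bigl(\e_{0}[e^{-\delta T_1}]\bigr)^{r}$, and the whole problem collapses to identifying $\vp(\delta):=\e_{0}[e^{-\delta T_1}]$ and checking that $-\ln\vp(\delta)=\arg\cosh(e^{\delta})$.

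To get $\vp(\delta)$ I would set up a one-step recursion by conditioning on $b_1$. Starting from $0$, with probability $\tfrac12$ the walk steps to $1$ (reaching the target at time $1$) and with probability $\tfrac12$ it steps to $-1$, from which it must first climb back to $0$ — an independent copy of $T_1$ — and then perform another independent copy of $T_1$ to go from $0$ to $1$. This gives the identity $\vp(\delta)=\tfrac12 e^{-\delta}+\tfrac12 e^{-\delta}\vp(\delta)^{2}$, a quadratic in $\vp(\delta)$ whose relevant root (the one in $[0,1]$, continuous in $\delta$ with $\vp\to 0$ as $\delta\to\infty$) is $\vp(\delta)=e^{\delta}-\sqrt{e^{2\delta}-1}$. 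A direct check then shows $e^{\delta}=\tfrac12(\vp(\delta)+\vp(\delta)^{-1})=\cosh(-\ln\vp(\delta))$, i.e.\ $-\ln\vp(\delta)=\arg\cosh(e^{\delta})$, which yields \eqref{laplace:T}. Alternatively one can bypass the quadratic by recalling the explicit law of $T_r$ from \eqref{distribution:Sn-Tr} and summing the generating series, but the recursion is cleaner.

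For \eqref{laplace:tau}, the case $r\ge 1$ is immediate since $\tau_r=T_r$ on the event that the walk does not start at $r$, and here it starts at $0\ne r$, so $\tau_r$ and $T_r$ have the same law under $\p_{0}$. For $r=0$ the first return $\tau_0$ satisfies $\tau_0=1+T_1'$ where, conditionally on the first step (which lands at $\pm 1$ by symmetry), $T_1'$ is an independent copy of the time to hit $0$ from $1$, itself distributed as $T_1$ by symmetry of the walk. Thus $\e_{0}[e^{-\delta\tau_0}]=e^{-\delta}\e_{0}[e^{-\delta T_1}]=e^{-\delta}\vp(\delta)=\exp\{-\delta-\arg\cosh(e^{\delta})\}$, as claimed.

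The only mildly delicate point — the main obstacle — is the selection of the correct branch of the quadratic and the justification that $\vp(\delta)<\infty$ (indeed $\le 1$) for $\delta>0$; this is handled by noting $T_1<\infty$ $\p_{0}$-a.s.\ (recurrence of the simple walk), so $0<\vp(\delta)\le\vp(0)=1$, which forces the root $e^{\delta}-\sqrt{e^{2\delta}-1}$ rather than $e^{\delta}+\sqrt{e^{2\delta}-1}$. Everything else is routine.
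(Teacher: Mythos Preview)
Your proof is correct. The decomposition $T_r=\sum_{k=1}^{r}(T_k-T_{k-1})$ into i.i.d.\ copies of $T_1$, the first-step recursion $\vp(\delta)=\tfrac12 e^{-\delta}+\tfrac12 e^{-\delta}\vp(\delta)^{2}$, the selection of the root $e^{\delta}-\sqrt{e^{2\delta}-1}$, and the identification $-\ln\vp(\delta)=\arg\cosh(e^{\delta})$ are all sound; the treatment of $\tau_r$ and in particular of $\tau_0\stackrel{d}{=}1+T_1$ is exactly right and coincides with what the paper records.

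The paper, however, takes a different (shorter) route: it invokes the exponential martingale $\{\exp(\eta b_n-\delta n)\}_{n\ge0}$, where $\eta$ is chosen so that $\e_0[e^{\eta\xi-\delta}]=1$, i.e.\ $\cosh\eta=e^{\delta}$, hence $\eta=\arg\cosh(e^{\delta})$. Optional stopping at $T_r$ then gives $e^{\eta r}\,\e_0[e^{-\delta T_r}]=1$ directly, yielding \eqref{laplace:T} for all $r$ in one stroke, with no quadratic to solve and no need to decompose $T_r$ into increments. Your first-step argument is more self-contained and does not require guessing the right martingale, but the martingale/optional-stopping method is the quicker path once one knows it; it also makes the appearance of $\arg\cosh$ transparent from the outset rather than recovered a posteriori.
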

\begin{proof}
This is an elementary computation based on the fact that
$\{\exp(\eta b_n-\delta n);\,n\geq 1\}$ is a martingale.
Also, note that $\tau_{0}$ has the same law as $1+T_{1}$.
\end{proof}

\section{Laplace transform of the local time}\label{sec:laplace-local-time}~~
\setcounter{equation}{0}
Our aim in this section is to find an asymptotic equivalent for the Laplace transform 
$Z_{n}^{f}$ of $\ell_n$. However, for computational purposes, we will also have to
consider the following constrained Laplace transform :
\begin{equation*}
Z_{2m}^{c}:=\e_{0}\left[e^{\beta\ell_{2m}}\mathds{1}_{\{b_{2m}=0\}}\right]\quad(\beta\geq 0). 
\end{equation*}
With this notation in hand, here is our first result about the exponential moments of 
the local time:
\begin{lem}\label{lem:equiv-laplace:tl0}
For any $\beta>0$, we have
\begin{equation}\label{equiv:laplace:tl0c}
\lim_{m\to\infty}\left(e^{-\beta}(2-e^{-\beta})\right)^{m}Z_{2m}^{c}
= c_\beta^c,
\quad\mbox{ where }\quad
c_\beta^c:=\frac{2(1-e^{-\beta})}{2-e^{-\beta}},
\end{equation}
and
\begin{equation}\label{equiv:laplace:tl0f}
\lim_{n\to\infty}\left(e^{-\beta}(2-e^{-\beta})\right)^{\lfloor\nicefrac{n}{2}\rfloor}Z_{n}^{f}
= c_\beta^f,
\quad\mbox{ where }\quad
c_\beta^f:=\frac{2}{2-e^{-\beta}}.
\end{equation}
\end{lem}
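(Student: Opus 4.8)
The plan is to compute the generating function of $\ell_n$ exactly via a renewal decomposition, and then extract the asymptotics by a singularity analysis of that generating function. First I would work with the constrained quantity $Z_{2m}^c$, since the walk is forced back to $0$ at time $2m$, and introduce the excursion generating function: writing $\tau_0$ for the first return time to $0$ and $u(s)=\e_0[s^{\tau_0}]$, the path up to time $2m$ with $b_{2m}=0$ splits into a sequence of i.i.d.\ excursions away from $0$, each of which contributes a factor $e^\beta$ to $e^{\beta\ell_{2m}}$ (recalling that each return to $0$ increases the local time by one). Hence the generating function satisfies
\[
\sum_{m\ge 0} Z_{2m}^c\, s^{2m}
= \frac{1}{1 - e^{\beta} u(s)},
\]
where $u(s)=\e_0[s^{\tau_0}]$ is the probability generating function of the return time of the simple symmetric walk, namely $u(s)=1-\sqrt{1-s^2}$ (which one can read off from Lemma~\ref{laplace:hitt-time} with $s=e^{-\delta}$, using that $\tau_0$ has the law of $1+T_1$ and $\e_0[e^{-\delta T_1}]=e^{-\arg\cosh(e^\delta)}$).

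The dominant singularity of $1/(1-e^\beta u(s))$ for $\beta>0$ is the root $s_\beta\in(0,1)$ of $e^\beta u(s_\beta)=1$, i.e.\ $1-\sqrt{1-s_\beta^2}=e^{-\beta}$, which gives $s_\beta^2 = 1-(1-e^{-\beta})^2 = e^{-\beta}(2-e^{-\beta})$. Near $s_\beta$ the denominator has a simple zero, so by the standard transfer theorem (or by directly inverting the rational-in-$\sqrt{1-s^2}$ expression) one gets
\[
Z_{2m}^c \sim \frac{1}{-e^\beta u'(s_\beta)\, s_\beta}\; s_\beta^{-2m}
= \frac{1}{-e^\beta u'(s_\beta)\, s_\beta}\;\bigl(e^{-\beta}(2-e^{-\beta})\bigr)^{-m}.
\]
Computing $u'(s)=s/\sqrt{1-s^2}$ and evaluating at $s_\beta$ with $\sqrt{1-s_\beta^2}=1-e^{-\beta}$ yields exactly the constant $c_\beta^c=2(1-e^{-\beta})/(2-e^{-\beta})$ claimed in \eqref{equiv:laplace:tl0c}; this is a short algebraic verification. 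For the free endpoint version $Z_n^f$, I would write the walk as a sequence of complete excursions followed by a final incomplete piece (a meander, or simply the walk killed before its next return to $0$), giving
\[
\sum_{n\ge 0} Z_n^f\, s^n = \frac{1}{1-e^\beta u(s)}\cdot v(s),
\]
where $v(s)=\sum_{k\ge 0}\p_0(\tau_0>k)\,s^k = (1-u(s))/(1-s)$ is the generating function of the tail probabilities of $\tau_0$. The singularity at $s_\beta$ is again simple, and the same residue computation gives $c_\beta^f = c_\beta^c \cdot v(s_\beta)/(\text{adjustment for parity})$; evaluating $v(s_\beta) = (1-e^{-\beta})/(1-s_\beta)$ and tracking the parity of $n$ (whence the $\lfloor n/2\rfloor$ in the exponent, since $s_\beta^{-n}$ and $s_\beta^{-2\lfloor n/2\rfloor}$ differ by a bounded factor that must be absorbed) produces $c_\beta^f = 2/(2-e^{-\beta})$ as in \eqref{equiv:laplace:tl0f}.

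The main obstacle I anticipate is not the singularity analysis itself but the careful bookkeeping of two discrete subtleties: first, the parity constraint (the walk returns to $0$ only at even times, so all the relevant generating functions are really functions of $s^2$, and one must be scrupulous about whether $n$ is even or odd when passing from $s_\beta^{-n}$ to $\bigl(e^{-\beta}(2-e^{-\beta})\bigr)^{-\lfloor n/2\rfloor}$); and second, the precise relation between $\ell_n$ and the number of completed excursions, including the boundary term $\mathds{1}_{b_n=0}$ that later forces the modification $\hat\ell_p=\ell_p-\mathds{1}_{b_p=0}$. To keep the argument rigorous without invoking analytic combinatorics machinery, I would in fact prefer to avoid the transfer theorem and instead get the asymptotics by an elementary renewal argument: set $q_\beta(k)=e^\beta\,\p_0(\tau_0=k)\,s_\beta^k$, check that $\sum_k q_\beta(k)=1$ so that $q_\beta$ is a genuine (aperiodic-on-$2\mathds{Z}$) probability law with finite mean $\mu_\beta = e^\beta s_\beta u'(s_\beta)$, and apply the classical renewal theorem to conclude $s_\beta^{2m} Z_{2m}^c \to 1/\mu_\beta$ and similarly for $Z_n^f$ with the extra tail factor. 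This is exactly the "renewal trick" alluded to in the text, and it sidesteps any appeal to complex-analytic transfer theorems while delivering the stated constants after the same elementary evaluation of $u'(s_\beta)$.
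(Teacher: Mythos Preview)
Your proposal is correct and, once you opt for the renewal argument you describe at the end, essentially identical to the paper's proof: the paper defines the tilted excursion law $\tilde K_\beta(m) = e^{\beta - m\mbox{\small\tt F}(\beta)}K(m)$ (your $q_\beta$), solves $\mbox{\small\tt F}(\beta)=-\log s_\beta^2$ from the normalization condition, applies the renewal theorem to get $c_\beta^c = 1/\sum_m m\tilde K_\beta(m)$, and then treats $Z_n^f$ by convolving $Z^c$ with the tail $\overline K(j)$ (your factor $v(s)$) via dominated convergence. The generating-function packaging you begin with is a harmless variant of the same computation, and the parity bookkeeping you flag is precisely what the paper handles by working throughout with the half-time index $m$ and noting $Z_{2m+1}^f=Z_{2m}^f$.
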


\begin{proof}
According to (1.9)-(1.10) in \cite[p. 9]{Gi}, by using the renewal theorem, we can write
\begin{equation}\label{equa:renouv}
Z_{2m}^{c}=\e_{0}\left[e^{\beta\ell_{2m}}\mathds{1}_{\{b_{2m}=0\}}\right]
=\sum_{k=1}^{m}\sum_{{\bf r}\in A_{k,m}}
%\sum_{_{{\bf r}=(2r_{1},\ldots,2r_{k}),\sum_{j=1}^{k}r_{j}=m}}
\prod_{j=1}^{k}e^{\beta}\p_{0}(\tau_{0}=2r_{j})
%=e^{m\mbox{\small{\tt F}}(\beta)}{\tilde\p}_{\beta}(m\in\tau)\\
\underset{m\to\infty}{\sim}
\frac{e^{m\mbox{\small{\tt F}}(\beta)}}{\sum_{m}m{\tilde K}_{\beta}(m)},
\end{equation}
where we denoted $A_{k,m}=\{{\bf r}=(r_{1},\ldots,r_{k}),\sum_{j=1}^{k}r_{j}=m\}$.
%{\bf Pourquoi ne pas introduire une notation 
%$A_{r,m}=\{{\bf r}=(2r_{1},\ldots,2r_{k}),\sum_{j=1}^{k}r_{j}=m\}$?}
Here 
\begin{equation}\label{K}
{\tilde K}_{\beta}(m):=\exp\left(\beta-m\mbox{\small\tt F}(\beta)\right)K(m),
\quad\mbox{ where }\quad K(m):=\p_{0}(\tau_{0}=2m),
\end{equation}
and $\text{\small\tt F}(\beta)$ is the solution of the following equation (see also (1.6), p. 8 in 
\cite{Gi})
\begin{equation}\label{equa:F}
\sum_{m}e^{-m\mbox{\small\tt F}(\beta)}K(m)=e^{-\beta}
\quad\mbox{ i.e. }\quad
\e_0\lc e^{\nicefrac{-\mbox{\small\tt F}(\beta)\tau_0}{2}} \rc
= e^{-\beta}.
\end{equation}
Notice that in our case, equation (\ref{equa:F}) can be solved explicitly: thanks to relation
(\ref{laplace:tau}), it can be transformed into:
\begin{equation*}
\exp\left(\nicefrac{-\mbox{\small\tt F}(\beta)}{2}
-\arg\cosh\left(e^{\nicefrac{\mbox{\small\tt F}(\beta)}{2}}\right)\right)=e^{-\beta}
\Leftrightarrow
\cosh\left(\beta-\nicefrac{\mbox{\small\tt F}(\beta)}{2}\right)
=e^{\nicefrac{\mbox{\small\tt F}(\beta)}{2}}
\Leftrightarrow
e^{\beta-\mbox{\small\tt F}(\beta)}+e^{-\beta}=2,
\end{equation*}
and thus, the solution of (\ref{equa:F}) is given by
\begin{equation}\label{expr:F}
\mbox{\small\tt F}(\beta)=\beta-\ln(2-e^{-\beta}).
\end{equation}
On the other hand,
\begin{equation*}
%\varphi(\lambda)=
\sum_{m}me^{-\lambda m}\p_{0}(\tau_{0}=2m)
=-\frac{d}{d\lambda}\e_{0}\left[e^{\nicefrac{-\lambda\tau_{0}}{2}}\right]
=-\frac{d}{d\lambda}\left(1-e^{\nicefrac{-\lambda}{2}}(e^{\lambda}-1)^{\nicefrac{1}{2}}\right)
=\frac{e^{-\lambda}}{2(1-e^{-\lambda})^{\nicefrac{1}{2}}},
\end{equation*}
as we can see again by (\ref{laplace:tau}) and simple computation. Therefore, taking 
$\lambda=\mbox{\small\tt F}(\beta)$, we obtain 
\begin{equation}\label{tilde:espe}
\sum_{m}m{\tilde K}_{\beta}(m)
=e^{\beta}\sum_{m}m e^{-m\mbox{\small\tt F}(\beta)}\p_{0}(\tau_{0}=2m)
=\frac{2-e^{-\beta}}{2(1-e^{-\beta})},
\end{equation}
since, according to (\ref{expr:F}), 
$e^{-\mbox{\small\tt F}(\beta)}=e^{-\beta}(2-e^{-\beta})=1-(1-e^{-\beta})^{2}$.
Puting together (\ref{equa:renouv}), (\ref{expr:F}) and (\ref{tilde:espe}) we get 
the equivalent for the constrained Laplace transform (\ref{equiv:laplace:tl0c}).

We proceed now with the study of the free Laplace transform, called $Z_{n}^{f}$. Set 
$\overline{K}(n):=\sum_{j>n}K(j)$.  
We can write 
\begin{multline*}
Z_{2m}^{f}
=\sum_{j=0}^{m}\e_{0}\left[e^{\beta\ell_{2m}}\mathds{1}_{\max\{k\leq m,b_{2k}=0\}=j}\right]
=\sum_{j=0}^{m}\e_{0}\left[e^{\beta\ell_{2j}}\mathds{1}_{\{b_{2j}=0\}}
\mathds{1}_{\{\tau_{0}\circ\theta_{2j}>2(m-j)\}}\right]\\
=\sum_{j=0}^{m}\e_{0}\left[e^{\beta\ell_{2j}}\mathds{1}_{\{b_{2j}=0\}}\right]
\p_{0}\left(\tau_{0}>2(m-j)\right)
=\sum_{j=0}^{m}\e_{0}\left[e^{\beta\ell_{2(m-j)}}\mathds{1}_{\{b_{2(m-j)}=0\}}\right]
{\overline K}(j)\\
=\sum_{j=0}^{m}Z_{2(m-j)}^{c}{\overline K}(j)
=e^{m\mbox{\small\tt F}(\beta)}\sum_{j=0}^{m}e^{-(m-j)\mbox{\small\tt F}(\beta)}\,Z_{2(m-j)}^{c}
e^{-j\mbox{\small\tt F}(\beta)}{\overline K}(j).
\end{multline*}
In order to use (\ref{equiv:laplace:tl0c}) on the right hand side of the latter equality we need 
to apply the dominated convergence theorem. This is allowed by the inequality 
\begin{equation}\label{eq:up-bnd-zmc}
e^{-(m-j)\mbox{\small\tt F}(\beta)}\,Z_{2(m-j)}^{c}\leq 1,
\end{equation}
which is valid since $e^{-j\mbox{\small\tt F}(\beta)} Z_{2j}^{c}$ represents 
the probability that a random walk with positive increments with law ${\tilde K}_{\beta}$ 
passes by $j$ (see also (1.9) in \cite{Gi}, p. 9).
Therefore, according to (\ref{equiv:laplace:tl0c}) and (\ref{laplace:tau}),
\begin{multline*}
Z_{2m}^{f}
\underset{m\to\infty}{\sim}
c_{\beta}^{c}e^{m\mbox{\small\tt F}(\beta)}
\sum_{j=0}^{\infty}e^{-j\mbox{\small\tt F}(\beta)}\sum_{i=j+1}^{\infty}K(i)
=c_{\beta}^{c}e^{m\mbox{\small\tt F}(\beta)}
\sum_{i=1}^{\infty}K(i)\sum_{j=0}^{i-1}e^{-j\mbox{\small\tt F}(\beta)}\\
=\frac{c_{\beta}^{c}}{1-e^{-\mbox{\small\tt F}(\beta)}}e^{m\mbox{\small\tt F}(\beta)}
\left(\sum_{i=1}^{\infty}K(i)-\sum_{i=1}^{\infty}K(i)e^{-i\mbox{\small\tt F}(\beta)}\right)
=\frac{c_{\beta}^{c}e^{m\mbox{\small\tt F}(\beta)}
(1-e^{-\beta})}{1-e^{-\mbox{\small\tt F}(\beta)}}
=\frac{2e^{m\mbox{\small\tt F}(\beta)}}{2-e^{-\beta}}
\end{multline*}
and we get (\ref{equiv:laplace:tl0f}), by using (\ref{expr:F}).
To finish the proof, let us note that, for any $\beta>0$,
\begin{equation}\label{laplace:impair-free}
\e_{0}\left[e^{\beta\ell_{2m+1}}\right]=\e_{0}\left[e^{\beta\ell_{2m}}\right]=Z_{2m}^{f}
\underset{m\to\infty}{\sim}c_{\beta}^{f}e^{m\mbox{\small\tt F}(\beta)}.
\end{equation}
\end{proof}

We will now go one step further and give an equivalent of $\e_{x}\left[e^{\beta\ell_{n}}\right]$ 
for an arbitrary $x\in\mathds{Z}$. Let us denote by $\mathds{O}$ the set of odd integers in 
$\mathds{Z}$.

\begin{lem}\label{equiv:laplace:issux}
Let $x\in\mathds{Z}$ be the starting point for $b$ and recall that the constant $c_{\beta}^{f}$ 
has been defined at relation (\ref{equiv:laplace:tl0f}). Then, for any $\beta>0$, 
\begin{equation}\label{equiv:laplace:tlx}
\e_{x}\left[e^{\beta\ell_{n}}\right]
\underset{n\to\infty}{\sim}
c_{\beta}^{f}
\exp\left\{\frac{\mbox{\small\tt F}(\beta)}{2}\,
(n+|x|-\mathds{1}_{\mathds{O}}(n+x))-\beta|x|\right\}.
\end{equation}
\end{lem}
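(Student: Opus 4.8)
The plan is to reduce the statement to the case $x=0$, which is the content of Lemma~\ref{lem:equiv-laplace:tl0}, by conditioning on the first visit of $b$ to the origin. Since $\ell_n$ is left unchanged by the reflection $b\mapsto-b$, one has $\e_x[e^{\beta\ell_n}]=\e_{|x|}[e^{\beta\ell_n}]$, so it is enough to treat $x\ge 1$. Writing $T_0$ for the first hitting time of $0$, and using $\{\ell_n=0\}=\{T_0>n\}$ under $\p_x$, the strong Markov property at $T_0$, and the identity in law $\p_x(T_0=k)=\p_0(T_{|x|}=k)$ (translation invariance together with the symmetry of $b$), one is led to the renewal-type decomposition
\begin{equation*}
\e_x\left[e^{\beta\ell_n}\right]=\p_0(T_{|x|}>n)+\sum_{k}\p_0(T_{|x|}=k)\,\e_0\left[e^{\beta\ell_{n-k}}\right],
\end{equation*}
the sum being over the integers $k\in[|x|,n]$ having the parity of $x$.

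The first term is bounded by $1$ and is therefore negligible compared with the right-hand side of (\ref{equiv:laplace:tlx}), since $\mathtt{F}(\beta)>0$ for $\beta>0$ (immediate from (\ref{expr:F}) and $2-e^{-\beta}<e^{\beta}$). For the sum, one observes that for $k$ of the parity of $x$ one has $\lfloor(n-k)/2\rfloor=\frac{1}{2}(n-\mathds{1}_{\mathds{O}}(n+x))-\frac{k}{2}$, so that, dividing through by $\exp\{\frac{\mathtt{F}(\beta)}{2}(n-\mathds{1}_{\mathds{O}}(n+x))\}$ and writing $Z_j^f=\e_0[e^{\beta\ell_j}]$, the sum takes the form
\begin{equation*}
\sum_{k}\p_0(T_{|x|}=k)\,e^{-k\mathtt{F}(\beta)/2}\,\left[e^{-\mathtt{F}(\beta)\lfloor(n-k)/2\rfloor}Z_{n-k}^f\right].
\end{equation*}
By (\ref{equiv:laplace:tl0f}) (recall that $e^{-\beta}(2-e^{-\beta})=e^{-\mathtt{F}(\beta)}$), the bracket converges to $c_\beta^f$ as $n\to\infty$ for each fixed $k$, and being a convergent sequence in the index $n-k$ it is uniformly bounded. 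As $\sum_k\p_0(T_{|x|}=k)\,e^{-k\mathtt{F}(\beta)/2}=\e_0[e^{-\mathtt{F}(\beta)T_{|x|}/2}]<\infty$, dominated convergence applies and the displayed sum converges to $c_\beta^f\,\e_0[e^{-\mathtt{F}(\beta)T_{|x|}/2}]$.

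It then remains to evaluate this Laplace transform. By (\ref{laplace:T}), $\e_0[e^{-\mathtt{F}(\beta)T_{|x|}/2}]=\exp\{-|x|\arg\cosh(e^{\mathtt{F}(\beta)/2})\}$, and taking logarithms in the first of the equivalences used to solve (\ref{equa:F}) in the proof of Lemma~\ref{lem:equiv-laplace:tl0} gives exactly $\arg\cosh(e^{\mathtt{F}(\beta)/2})=\beta-\mathtt{F}(\beta)/2$, the right-hand side being nonnegative because $\beta-\mathtt{F}(\beta)/2=\frac{\beta}{2}+\frac{1}{2}\ln(2-e^{-\beta})$. Hence $\e_0[e^{-\mathtt{F}(\beta)T_{|x|}/2}]=e^{-\beta|x|}e^{\mathtt{F}(\beta)|x|/2}$, and multiplying back by the factor divided out above yields
\begin{equation*}
\e_x\left[e^{\beta\ell_n}\right]\underset{n\to\infty}{\sim}c_\beta^f\exp\left\{\frac{\mathtt{F}(\beta)}{2}\bigl(n+|x|-\mathds{1}_{\mathds{O}}(n+x)\bigr)-\beta|x|\right\},
\end{equation*}
which is (\ref{equiv:laplace:tlx}). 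The one genuinely delicate point is the dominated-convergence step: it relies on the uniform bound for $e^{-\mathtt{F}(\beta)\lfloor j/2\rfloor}Z_j^f$ furnished by Lemma~\ref{lem:equiv-laplace:tl0} and on the summability of $\p_0(T_{|x|}=k)\,e^{-k\mathtt{F}(\beta)/2}$, i.e. on the positivity $\mathtt{F}(\beta)>0$ — which is precisely where the localized regime $\beta>0$ enters.
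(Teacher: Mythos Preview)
Your proof is correct and follows the same strategy as the paper: reduce to $x\ge 1$ by symmetry, split off the event $\{T_0>n\}$, apply the strong Markov property at $T_0$, and evaluate the resulting convolution via dominated convergence and the explicit Laplace transform (\ref{laplace:T}). The chief difference is organizational: the paper treats the four parity combinations of $(x,n)$ separately, whereas you absorb them all into the single identity $\lfloor(n-k)/2\rfloor=\tfrac12\bigl(n-\mathds{1}_{\mathds{O}}(n+x)\bigr)-\tfrac{k}{2}$ for $k$ of the parity of $x$, which is cleaner. A second minor difference is the dominated-convergence bound: the paper invokes the sharper inequality $e^{-j\mathtt{F}(\beta)}Z_{2j}^{f}\le 1$ (inherited from (\ref{eq:up-bnd-zmc})), while you simply use that a convergent sequence is bounded; either suffices here since the dominating weights $\p_0(T_{|x|}=k)e^{-k\mathtt{F}(\beta)/2}$ are summable regardless.
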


\vspace{0.05cm}

\begin{proof}
First of all, notice that, by symmetry of the random walk, 
$\e_{x}\left[e^{\beta\ell_{n}}\right]=\e_{-x}\left[e^{\beta\ell_{n}}\right]$. We will thus treat the
case of a strictly positive initial condition $x$ without loss of generality.

\vspace{0.2cm}

\noindent
\underline{Case $x,n\in\mathds{D}$.~} Let us split $\e_{x}\left[e^{\beta\ell_{2m}}\right]$ into
\begin{equation*}
\e_{x}\left[e^{\beta\ell_{2m}}\right]
=\p_{x}\left(T_0>2m\right)
+
\e_{x}\left[e^{\beta\ell_{2m}}\mathds{1}_{\{T_0\leq 2m\}}\right]
=:
D_1(2m)+D_2(2m).
\end{equation*}
Then, on the one hand,
\begin{equation*}
D_1(2m)=\p_{0}\left(T_x>2m\right)=\p_{0}\left(S_{2m}< x\right),
\end{equation*}
and thus, owing to Lemma \ref{equiv:sn-ln}, we have
\begin{equation}\label{equiv:d1n}
D_1(2m)\underset{m\to\infty}{\sim}\ka xm^{\nicefrac{-1}{2}}.
\end{equation}
On the other hand, setting $g(p)=\e_{0}\left[e^{\beta\ell_{p}}\right]$, we can write
\begin{multline*}
D_2(2m)=\e_{x}\left[\mathds{1}_{\{T_{0}\leq 2m\}}g(2m-T_{0})\right]
=\sum_{k=0}^{m}\p_{x}(T_{0}=2k)g(2(m-k))\\
=e^{m\mbox{\small\tt F}(\beta)}\sum_{k=0}^{m}\p_{x}(T_{0}=2k)e^{-k\mbox{\small\tt F}(\beta)}
g(2(m-k))e^{-(m-k)\mbox{\small\tt F}(\beta)}\\
\underset{m\to\infty}{\sim}
c_{\beta}^{f}e^{m\mbox{\small\tt F}(\beta)}
\e_{0}\left[\exp\left\{-\mbox{\small\tt F}(\beta)\frac{T_{x}}{2}\right\}\right]
=c_{\beta}^{f}\exp\left\{m\mbox{\small\tt F}(\beta)
-\arg\cosh\left(\frac{\mbox{\small\tt F}(\beta)}{2}\right)x\right\}\\
=c_{\beta}^{f}\exp\left\{\frac{\mbox{\small\tt F}(\beta)}{2}(2m+x)-\beta x\right\},
\end{multline*}
which is (\ref{equiv:laplace:tlx}). Here we used the dominated convergence theorem allowed again 
by the fact that $g(2(m-k))e^{-(m-k)\mbox{\small\tt F}(\beta)}\leq 1$
(this inequality being obtained by a little elaboration of (\ref{eq:up-bnd-zmc})).

\vspace{0.2cm}

\noindent
\underline{Case $x\in\mathds{D},n\in\mathds{O}$.~} 
Clearly, invoking the latter result, we have
\begin{equation*} 
\e_{x}\left[e^{\beta\ell_{n}}\right]=\e_{x}\left[e^{\beta\ell_{n-1}}\right]
\underset{n\to\infty}{\sim}
c_{\beta}^{f}\exp\left\{\frac{\mbox{\small\tt F}(\beta)}{2}(n-1+x)-\beta x\right\}.
\end{equation*}

\vspace{0.2cm}

\noindent
\underline{Case $x\in\mathds{O},n\in\mathds{D}$.~} 
Following a similar reasoning as for the first case, we see that it is enough to study the 
term $D_2(2m)$:
\begin{multline*}
D_2(2m)=\e_{x}\left[\mathds{1}_{\{T_{0}\leq 2m\}}g(2m-T_{0})\right]
=\sum_{k=1}^{m}\p_{x}(T_{0}=2k-1)g(2m-2k+1)\\
=\sum_{k=1}^{m}\p_{x}(T_{0}=2k-1)g(2(m-k))
\underset{m\to\infty}{\sim}
c_{\beta}^{f}e^{m\mbox{\small\tt F}(\beta)}
\sum_{k=1}^{\infty}\p_{x}(T_{0}=2k-1)e^{-k\mbox{\small\tt F}(\beta)}\\
=c_{\beta}^{f}e^{m\mbox{\small\tt F}(\beta)}
\e_{x}\left[\exp\left\{-\mbox{\small\tt F}(\beta)\frac{1+T_{0}}{2}\right\}\right]
=c_{\beta}^{f}e^{(m-\nicefrac{1}{2})\mbox{\small\tt F}(\beta)}
\e_{0}\left[\exp\left\{-\mbox{\small\tt F}(\beta)\frac{T_{x}}{2}\right\}\right]\\
=c_{\beta}^{f}e^{(m-\nicefrac{1}{2})\mbox{\small\tt F}(\beta)}
\exp\left\{\left(\frac{\mbox{\small\tt F}(\beta)}{2}-\beta\right)\right\}
=c_{\beta}^{f}\exp\left\{\frac{\mbox{\small\tt F}(\beta)}{2}(2m-1+x)-\beta x\right\}.
\end{multline*}
Here we used again the dominated convergence theorem and the fact that $\ell_{2(m-k)+1}$ 
and $\ell_{2(m-k)}$ have the same law under $\p_{0}$.

\vspace{0.2cm}

\noindent
\underline{Case $x,n\in\mathds{O}$.~} 
Again, by using the preceding result
\begin{equation*} 
\e_{x}\left[e^{\beta\ell_{n}}\right]=\e_{x}\left[e^{\beta\ell_{n+1}}\right]
\underset{n\to\infty}{\sim}
c_{\beta}^{f}\exp\left\{\frac{\mbox{\small\tt F}(\beta)}{2}(n+x)-\beta x\right\}.
\end{equation*}
\end{proof}

\section{Gibbs limit}\label{sec:gibbs-limit}~~
\setcounter{equation}{0}
Let us turn now to the asymptotic behaviour of the measure $\q_{0}^{(n,\beta)}$ defined at
(\ref{def:rho-n}). To this purpose, we will need an additional definition: for
$n\ge 0$, let ${\hat\ell}_{n}$ be the modified local time given by:
\begin{equation*}
{\hat\ell}_{n}=\ell_n-\mathds{1}_{\{b_n=0\}},
\end{equation*}
and notice that this modified local time appears here because $\ell$ satisfies the relation
\begin{equation*}
\ell_n={\hat\ell}_p+\ell_{n-p}\circ \theta_p
\quad\mbox{ instead of }\quad
\ell_n=\ell_p+\ell_{n-p}\circ \theta_p.
\end{equation*}
Indeed, it is readily checked that one zero is doubly counted in the latter relation if $b_p=0$.

With this notation in hand, the limit of $\q$ is given by the following:
\begin{prop}
For any $p\ge 0$, the measure $\q_{0}^{(n,\beta)}$ converges weakly on $\cf_{p}$, as 
$n\to\infty$, to  the measure $\q_{0}^{(\beta)}$ given by
\begin{equation}\label{new:prob}
\q_{0}^{(\beta)}(\gga_{p})=\e_{0}\left[\mathds{1}_{\gga_{p}}M_{p}^{(\beta)}\right],
\quad\mbox{ for }\quad
\gga_{p}\in\cf_{p},
\end{equation}
with $M^{(\beta)}$ a positive martingale defined by
\begin{equation}\label{martingale}
M_p^{(\beta)}=\exp\left\{\beta{\hat\ell}_{p}-c_{+}(\beta)\,|b_{p}|-c_{-}(\beta)\,p\right\},
\end{equation}
where 
\begin{equation}\label{les-constantes}
c_{\pm}(\beta)=(\nicefrac{1}{2})[\beta\pm\ln(2-e^{-\beta})].
\end{equation}
\end{prop}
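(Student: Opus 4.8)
The plan is to recognize $\q_0^{(\beta)}$ as a concrete penalization limit, in the spirit of Proposition \ref{prop:general-penalization}, the only analytic input being the sharp equivalents of Lemma \ref{equiv:laplace:issux}. Fix $p\ge 0$ and $\gga_p\in\cf_p$. Using the Markov property of $b$ together with the relation $\ell_n=\hat\ell_p+\ell_{n-p}\circ\theta_p$ recalled just before the statement (this is precisely why the \emph{modified} local time $\hat\ell_p$ appears), one gets
\begin{equation*}
\q_0^{(n,\beta)}(\gga_p)=\e_0\!\left[\mathds{1}_{\gga_p}\,R_n\right],
\qquad\mbox{where}\qquad
R_n:=e^{\beta\hat\ell_p}\,\frac{\e_{b_p}[e^{\beta\ell_{n-p}}]}{\e_0[e^{\beta\ell_n}]}.
\end{equation*}
Everything then reduces to: (i) the pointwise convergence $R_n\to M_p^{(\beta)}$, and (ii) a uniform-in-$n$ integrability bound on $R_n$ that lets one pass to the limit inside $\e_0$.

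For (i) I would apply Lemma \ref{equiv:laplace:issux} twice, with initial point $x=b_p$ and horizon $n-p$, and with $x=0$ and horizon $n$; since $b_p$ is $\cf_p$-measurable and takes finitely many values this is harmless on each atom $\{b_p=x\}$. The one observation that makes it work is a parity cancellation: as $b_p$ has the parity of $p$, one has $\mathds{1}_{\mathds{O}}((n-p)+b_p)=\mathds{1}_{\mathds{O}}(n)$, so the parity corrections in Lemma \ref{equiv:laplace:issux}, together with the common prefactor $c_\beta^f$, cancel in the ratio, leaving
\begin{equation*}
\lim_{n\to\infty}\frac{\e_{b_p}[e^{\beta\ell_{n-p}}]}{\e_0[e^{\beta\ell_n}]}
=\exp\!\left\{\frac{\mbox{\small\tt F}(\beta)}{2}\,(|b_p|-p)-\beta|b_p|\right\}.
\end{equation*}
Substituting $\mbox{\small\tt F}(\beta)=\beta-\ln(2-e^{-\beta})$ from (\ref{expr:F}) gives $\mbox{\small\tt F}(\beta)/2=c_-(\beta)$ and $\mbox{\small\tt F}(\beta)/2-\beta=-c_+(\beta)$, hence $R_n\to e^{\beta\hat\ell_p-c_+(\beta)|b_p|-c_-(\beta)p}=M_p^{(\beta)}$ almost surely.

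The step I expect to be the real obstacle is (ii), namely producing a dominating function; I would aim for a \emph{deterministic} bound $R_n\le C_{p,\beta}$ valid for all $n>p$, after which (i) upgrades to $\q_0^{(n,\beta)}(\gga_p)\to\e_0[\mathds{1}_{\gga_p}M_p^{(\beta)}]=:\q_0^{(\beta)}(\gga_p)$ by dominated convergence (note $e^{\beta\hat\ell_p}\le e^{\beta(p+1)}$ and $p$ is fixed). Two ingredients: first, by (\ref{equiv:laplace:tl0f}) the positive sequence $e^{-\lfloor n/2\rfloor\mbox{\small\tt F}(\beta)}\e_0[e^{\beta\ell_n}]$ converges to $c_\beta^f>0$, hence is bounded below by a positive constant uniformly in $n$; second, re-running the splitting $\e_x[e^{\beta\ell_m}]=\p_0(S_m<|x|)+D_2(m)$ from the proof of Lemma \ref{equiv:laplace:issux} and inserting the sub-probability estimate $g(2j)e^{-j\mbox{\small\tt F}(\beta)}\le 1$ established there (an elaboration of (\ref{eq:up-bnd-zmc})), one obtains $\e_x[e^{\beta\ell_m}]\le 1+e^{\lfloor m/2\rfloor\mbox{\small\tt F}(\beta)}$. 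Since $\mbox{\small\tt F}(\beta)>0$ for $\beta>0$ (because $e^{-\mbox{\small\tt F}(\beta)}=1-(1-e^{-\beta})^2<1$, as noted after (\ref{tilde:espe})) and $\lfloor(n-p)/2\rfloor-\lfloor n/2\rfloor$ stays bounded in terms of $p$, these two bounds combine into $R_n\le C_{p,\beta}$.

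It remains to check that $M^{(\beta)}$ is a nonnegative martingale and $\q_0^{(\beta)}$ a probability measure. Nonnegativity is clear from the exponential form; taking $\gga_p=\oom$ above gives $\e_0[M_p^{(\beta)}]=\lim_n\q_0^{(n,\beta)}(\oom)=1$, so $\q_0^{(\beta)}$ is a probability and each $M_p^{(\beta)}$ is integrable. For the identity $\e_0[M_{p+1}^{(\beta)}\mid\cf_p]=M_p^{(\beta)}$ I would use the short one-step computation: condition on $b_{p+1}=b_p\pm 1$ and use $\hat\ell_{p+1}=\hat\ell_p+\mathds{1}_{\{b_p=0\}}$ together with the elementary identities $c_+(\beta)+c_-(\beta)=\beta$, $c_+(\beta)-c_-(\beta)=\ln(2-e^{-\beta})$ (whence $\cosh c_+(\beta)=e^{c_-(\beta)}$); checking the three cases $b_p=0$, $|b_p|=1$ and $|b_p|\ge 2$ separately then yields $\e_0[M_{p+1}^{(\beta)}/M_p^{(\beta)}\mid\cf_p]=1$. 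Alternatively, one recognizes this as part (1) of Proposition \ref{prop:general-penalization}, whose hypotheses are exactly the pointwise convergence and domination obtained above.
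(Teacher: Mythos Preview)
Your argument follows the paper's proof essentially step for step: the same Markov decomposition $\ell_n=\hat\ell_p+\ell_{n-p}\circ\theta_p$, the same pointwise limit via Lemma~\ref{equiv:laplace:issux} (your parity remark is exactly what makes the two applications of (\ref{equiv:laplace:tlx}) match up), and a direct one-step verification of the martingale property via $\cosh c_+(\beta)=e^{c_-(\beta)}$. The only difference is in the domination step (ii), where the paper takes a shorter route: from $\e_x[e^{\beta\ell_m}]\le\e_0[e^{\beta\ell_m}]$ (condition on $T_0$ and use $\e_0[e^{\beta\ell_j}]\ge 1$ increasing in $j$) and $\ell_{n-p}\le\ell_n$ one gets $U_{n,p}(x)\le 1$ outright, so $R_n\le e^{\beta\hat\ell_p}\le e^{\beta p}$ without re-entering the proofs of Lemmas~\ref{lem:equiv-laplace:tl0}--\ref{equiv:laplace:issux}.
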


\begin{proof}
For $n\ge p$, let us decompose $\ell_n$ into
$$
\ell_n={\hat\ell}_p+\ell_{n-p}\circ \theta_p.
$$
Thanks to this decomposition, we obtain, for a given $\gga_p\in\cf_p$,
\begin{equation}\label{exp2:rho-n-ga}
\q_{0}^{(n,\beta)}(\gga_p)
=\e_{0}\left[\mathds{1}_{\gga_p} e^{\beta{\hat\ell}_p} U_{n,p}(b_p)\right],
\quad\mbox{ with }\quad
U_{n,p}(x)= 
\frac{\e_x\left[e^{\beta\ell_{n-p}}\right]}{\e_0\left[e^{\beta\ell_{n}}\right]}.
\end{equation}
Moreover, according to relation (\ref{equiv:laplace:tlx}), we have, for any $x\in\mathds{Z}$,
\begin{equation}\label{equiv:unpx}
U_{n,p}(x)
\underset{n\to\infty}{\sim}
\left\{\begin{array}{ll}
\exp\{\frac{\mbox{\small\tt F}(\beta)}{2}(|x|-p)-\beta|x|-\mathds{1}_{\mathds{O}}(x+p)\}
&\mbox{ if }\,n\in\mathds{D}\\\\
\exp\{\frac{\mbox{\small\tt F}(\beta)}{2}(|x|-p)-\beta|x|+\mathds{1}_{\mathds{O}}(x+p)\}
&\mbox{ if }\,n\in\mathds{O},
\end{array}\right.
\end{equation}
where we used the symmetry on $x$. To apply the dominated convergence theorem 
let us note that 
\begin{equation*}
\e_x\left[e^{\beta\ell_{n-p}}\right]\leq \e_0\left[e^{\beta\ell_{n-p}}\right],\,\forall x\in\mathds{Z}
\quad\Rightarrow\quad
U_{n,p}(x)\leq 1,\,\forall x\in\mathds{Z}
\quad\Rightarrow\quad
U_{n,p}(b_p)\leq 1.
\end{equation*}
Therefore, we obtain that 
\begin{equation*}
M_{p}^{(\beta)}
=\exp\left\{\frac{\mbox{\small\tt F}(\beta)}{2}(b_{p}-p)-\beta b_{p}+\beta{\hat\ell}_{p}\right\},
\end{equation*}
and we deduce (\ref{martingale}).
It is now easily checked that the process $M^{(\beta)}$ is a martingale. Indeed, setting
$N_{p}^{(\beta)}=\ln(M_{p}^{(\beta)})$, and noting that $c_{+}(\beta)+c_{-}(\beta)=\beta$, we have
\begin{multline*}
N_{p+1}^{(\beta)}=\beta{\hat\ell}_{p+1}-c_{+}(\beta)\,|b_{p+1}|-c_{-}(\beta)\,(p+1)\\
=\mathds{1}_{\{b_{p}=0\}}[\beta({\hat\ell}_{p}+1)-\beta-c_{-}(\beta)\,p]
+\mathds{1}_{\{b_{p}\neq 0\}}
[\beta({\hat\ell}_{p}-c_{+}(\beta)\,(|b_{p}|+\xi_{p+1})-c_{-}(\beta)\,(p+1)],
\end{multline*}
where $\xi_{p+1}$ is a symmetric $\pm 1$-valued random variable independent of $\cf_{p}$,
representing the increment of $b$ at time $p+1$.
Hence
\begin{equation}\label{logmartingale}
N_{p+1}^{(\beta)}
=\mathds{1}_{\{b_{p}=0\}}N_{p}^{(\beta)}+\mathds{1}_{\{b_{p}\neq 0\}}[N_{p}^{(\beta)}
-c_{+}(\beta)\,\xi_{p+1}-c_{-}(\beta)].
\end{equation}
Thus
\begin{equation*}
\e_{0}[M_{p+1}^{(\beta)}\mid\cf_{p}]=\mathds{1}_{\{b_{p}=0\}}M_{p}^{(\beta)}
+\mathds{1}_{\{b_{p}\neq 0\}}M_{p}^{(\beta)}\,\cosh(c_{+}(\beta))\exp(-c_{-}(\beta)),
\end{equation*}
from which the martingale property is readily obtained from the definition (\ref{les-constantes}).
\end{proof}

\begin{rem}
It should be noticed that the convergence of $\mathds{Q}_0^{(n,\beta)}$ 
we have obtained on $\cf_p$ is stronger than the weak convergence. In fact, we have
been able to prove that, for any $\Gamma_p\in\cf_p$, we have
$\lim_{n\to\infty}\mathds{Q}_0^{(n,\beta)}(\Gamma_p)=\mathds{Q}_0^{(\beta)}(\Gamma_p)$.
This property is classical in the penalization theory.
\end{rem}

\section{The process under the new probability measure}
\label{sec:process-new-pb-measure}~~
\setcounter{equation}{0}
It must be noticed that $\q_{0}^{(\beta)}$ is a probability measure on $(\Omega,\cf,(\cf_{n})
_{n\geq 1})$, since $M_{0}^{(\beta)}=1$. In this section we study the process $\{b_{n};n\geq 1\}$ 
under the new probability measure $\q_{0}^{(\beta)}$, which recovers the results of Theorem 
\ref{thm:desription-gibbs-limit}, part 2.

\begin{prop}
Let $\q_{0}^{(\beta)}$ be the probability measure defined by {\rm (\ref{new:prob})} with 
$M^{(\beta)}$ given by {\rm (\ref{martingale})}. Then, under $\q_{0}^{(\beta)}$:
\begin{enumerate}
\item[a)] $\{b_{n};n\geq 1\}$ is a Markov process on the state space $\mathds{Z}$ 
having some transition probabilities given by 
\begin{equation}\label{new:naiss}
\q_{0}^{(\beta)}(b_{n}=r\mid b_{n-1}=r-1)=
\left\{\begin{array}{ll}
\nicefrac{e^{-\beta}}{2}&\mbox{ if }r>1\\\\
1-\nicefrac{e^{-\beta}}{2}&\mbox{ if }r<-1,
\end{array}\right.
\end{equation}
\begin{equation}\label{new:mort}
\q_{0}^{(\beta)}(b_{n}=r\mid b_{n-1}=r+1)=
\left\{\begin{array}{ll}
1-\nicefrac{e^{-\beta}}{2}&\mbox{ if }r\geq 0\\\\
\nicefrac{e^{-\beta}}{2}&\mbox{ if }r<-1
\end{array}\right.
\end{equation}
and 
\begin{equation}\label{new:0}
\q_{0}^{(\beta)}(b_{n}=1\mid b_{n-1}=0)=\q_{0}^{(\beta)}(b_{n}=-1\mid b_{n-1}=0)=\nicefrac{1}{2}.
\end{equation}
\item[b)] the Laplace transform of the first return time in 0 is given by 
\begin{equation}\label{new:laplace}
\e_{0}^{(\beta)}\left[e^{-\delta\tau_{0}}\right]
=e^{\beta}\left(e^{\delta+\mbox{\small\tt F}(\beta)}
-\left[e^{2(\delta+\mbox{\small\tt F}(\beta))}-1\right]^{\nicefrac{1}{2}}\right).
\end{equation}
In particular, $\e_{0}^{(\beta)}[\tau_0]<\infty$ for any $\beta>0$, and 
\begin{equation}\label{eq:equiv-mean-tau0}
\e_{0}^{(\beta)}[\tau_0]\sim 1-e^{-\beta/2},
\quad\mbox{ when }\quad
\beta\to\infty.
\end{equation}
\item[c)] the distribution law of the excursion between two succesive zero of the process 
 $\{b_{n};n\geq 1\}$ is the same as under $\p_{0}$. 
\end{enumerate}
\end{prop}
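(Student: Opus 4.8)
The three assertions will all follow from the observation that, $M^{(\beta)}$ being a nonnegative $\p_0$-martingale started at $1$, the measure $\q_0^{(\beta)}$ of (\ref{new:prob}) is a Doob $h$-transform of $\p_0$, so that everything can be read off from the one-step multiplicative increments of $M^{(\beta)}$. For part a), I would first compute $M_{p+1}^{(\beta)}/M_p^{(\beta)}$ from the closed form (\ref{martingale}): on $\{b_p=0\}$ both $\hat\ell$ and $|b|$ increase by one, and since $c_+(\beta)+c_-(\beta)=\beta$ by (\ref{les-constantes}) the increment $\beta-c_+(\beta)-c_-(\beta)$ vanishes, so the ratio equals $1$; on $\{b_p\neq0\}$ it equals $\exp\{-c_+(\beta)(|b_{p+1}|-|b_p|)-c_-(\beta)\}$, a function of the pair $(b_p,b_{p+1})$ only. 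Hence, for $A\in\cf_{p+1}$, the quantity $\q_0^{(\beta)}(A\mid\cf_p)=\e_0[\mathds{1}_A\,(M_{p+1}^{(\beta)}/M_p^{(\beta)})\mid\cf_p]$ depends on the past only through $b_p$, so $\{b_n;\,n\ge1\}$ is Markov under $\q_0^{(\beta)}$, with one-step kernel the $\p_0$-kernel tilted by $M_{p+1}^{(\beta)}/M_p^{(\beta)}$. Evaluating this from the state $0$, from a strictly positive state, and from a strictly negative state, and using $e^{-(c_+(\beta)+c_-(\beta))}=e^{-\beta}$, $e^{c_+(\beta)-c_-(\beta)}=2-e^{-\beta}$ together with $e^{-c_-(\beta)}\cosh c_+(\beta)=1$ (the identity that produced the martingale property of $M^{(\beta)}$), yields (\ref{new:naiss})--(\ref{new:0}).

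The heart of parts b) and c) is the transfer of this change of measure from deterministic times to the successive return times $\tau_0^j$: I claim that, for every $k\ge1$, $\q_0^{(\beta)}$ restricted to $\cf_{\tau_0^k}$ has $\p_0$-density $M_{\tau_0^k}^{(\beta)}$ (with $\tau_0^1=\tau_0$). I would prove this by optional stopping: for $B\in\cf_{\tau_0}$ and $n\ge0$ the event $B\cap\{\tau_0\le n\}$ lies in $\cf_n$, so $\q_0^{(\beta)}(B\cap\{\tau_0\le n\})=\e_0[\mathds{1}_{B\cap\{\tau_0\le n\}}M_n^{(\beta)}]=\e_0[\mathds{1}_{B\cap\{\tau_0\le n\}}M_{\tau_0}^{(\beta)}]$, the second equality coming from the stopping theorem applied to the bounded time $\tau_0\wedge n$; letting $n\to\infty$ closes the identity once $\tau_0<\infty$ almost surely under both $\p_0$ (classical recurrence of the simple walk) and $\q_0^{(\beta)}$. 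The latter holds because on $\{\tau_0>n\}$ one has $\hat\ell_n=1$ and $|b_n|\ge1$, whence $\q_0^{(\beta)}(\tau_0>n)=\e_0[\mathds{1}_{\{\tau_0>n\}}M_n^{(\beta)}]\le e^{\beta}e^{-c_-(\beta)n}\to0$, using that $c_-(\beta)=\frac{1}{2}[\beta-\ln(2-e^{-\beta})]>0$ for $\beta>0$. Since $|b_{\tau_0^k}|=0$ and $\hat\ell_{\tau_0^k}=\ell_{\tau_0^k}-1=k$, we obtain $M_{\tau_0^k}^{(\beta)}=e^{\beta k}e^{-c_-(\beta)\tau_0^k}=\prod_{j=1}^{k}e^{\beta}e^{-c_-(\beta)(\tau_0^j-\tau_0^{j-1})}$, a density depending on the trajectory only through the return times.

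For part b), taking $k=1$ gives $\e_0^{(\beta)}[e^{-\delta\tau_0}]=e^{\beta}\e_0[e^{-(\delta+c_-(\beta))\tau_0}]$; since $c_-(\beta)=\frac{1}{2}\mbox{\tt F}(\beta)$, the right-hand side is computed via the Laplace transform of $\tau_0$ in Lemma \ref{laplace:hitt-time} (case $r=0$), and a short rearrangement using $e^{-\mbox{\tt F}(\beta)}=e^{-\beta}(2-e^{-\beta})$ produces (\ref{new:laplace}); in passing, $e^{\beta}\e_0[e^{-c_-(\beta)\tau_0}]=e^{\beta}\e_0[e^{-\mbox{\tt F}(\beta)\tau_0/2}]=1$ by (\ref{equa:F}), which reconfirms that $\q_0^{(\beta)}$ is a probability. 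Differentiating (\ref{new:laplace}) at $\delta=0$ yields $\e_0^{(\beta)}[\tau_0]<\infty$, and a Taylor expansion as $\beta\to\infty$ gives the equivalent (\ref{eq:equiv-mean-tau0}). For part c), the density $M_{\tau_0}^{(\beta)}=e^{\beta}e^{-c_-(\beta)\tau_0}$ is a function of $\tau_0$ alone, so conditioning on $\{\tau_0=2m\}$ removes it: for each $m$, the law of $(b_r)_{0\le r\le\tau_0}$ given $\tau_0=2m$ is the same under $\q_0^{(\beta)}$ as under $\p_0$, namely that of a symmetric walk conditioned to return to $0$ for the first time at $2m$. Iterating at $\tau_0^1<\tau_0^2<\cdots$ by the strong Markov property, together with the product form of $M_{\tau_0^k}^{(\beta)}$ above, shows that under $\q_0^{(\beta)}$ the excursions are independent given their lengths, the $m$-values $\frac{1}{2}(\tau_0^j-\tau_0^{j-1})$ being i.i.d.\ with the tilted law $\tilde{K}_\beta$ of (\ref{K}), and each excursion distributed exactly as under $\p_0$ given its length; this is part c), and it simultaneously recovers part 2 of Theorem \ref{thm:desription-gibbs-limit}.

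The only genuinely delicate step is the optional-stopping argument of the second paragraph --- upgrading $d\q_0^{(\beta)}=M_p^{(\beta)}\,d\p_0$ on $\cf_p$ to the unbounded stopping times $\tau_0$ and $\tau_0^k$ --- which rests on the bound $\q_0^{(\beta)}(\tau_0>n)\le e^{\beta}e^{-c_-(\beta)n}$. The rest is the routine bookkeeping of an $h$-transform combined with Lemma \ref{laplace:hitt-time}.
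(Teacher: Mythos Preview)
Your proof is correct and follows essentially the same route as the paper: compute the one-step ratio $M_{p+1}^{(\beta)}/M_p^{(\beta)}$ to obtain the Markov property and the transition kernel, then use that on $\{\tau_0=2k\}$ the density reduces to the constant $e^{\beta-2kc_-(\beta)}$ to transfer the law of $\tau_0$ and the excursion to $\p_0$. The one difference is that you upgrade the density relation to $\cf_{\tau_0}$ via optional stopping (with the tail bound $\q_0^{(\beta)}(\tau_0>n)\le e^{\beta}e^{-c_-(\beta)n}$), whereas the paper sidesteps this entirely by working directly on the events $\{\tau_0=2k\}\in\cf_{2k}$ and applying the defining relation (\ref{new:prob}) at the deterministic time $2k$; so what you flag as ``the only genuinely delicate step'' is in fact avoidable, though your argument is a clean and more general way to phrase the same computation.
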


\begin{proof}
a) Let $\gga_{n-2}\in\cf_{n-2}$ arbitrary. Then 
\begin{multline}\label{new:markov}
\q_{0}^{(\beta)}(b_{n}=r\mid b_{n-1}=r-1,\gga_{n-2})
=\frac{\q_{0}^{(\beta)}(b_{n}=r,b_{n-1}=r-1,\gga_{n-2})}{\q_{0}^{(\beta)}(b_{n-1}=r-1,\gga_{n-2})}\\
=\frac{\e_{0}\left[\mathds{1}_{\{b_{n}=r\}}\mathds{1}_{\{b_{n-1}=r-1\}}\mathds{1}_{\gga_{n-2}}
M_{n}^{(\beta)}\right]}{\e_{0}\left[\mathds{1}_{\{b_{n-1}=r-1\}}\mathds{1}_{\gga_{n-2}}
M_{n-1}^{(\beta)}\right]}
=\frac{\e_{0}\left\{\e_{0}\left[\mathds{1}_{\{b_{n}=r\}}\mathds{1}_{\{b_{n-1}=r-1\}}\mathds{1}_{\gga_{n-2}}
M_{n}^{(\beta)}\mid\cf_{n-1}\right]\right\}}
{\e_{0}\left[\mathds{1}_{\{b_{n-1}=r-1\}}\mathds{1}_{\gga_{n-2}}M_{n-1}^{(\beta)}\right]}.
\end{multline}
First, assume that $r=1$ in the latter equality Since $M_{n}^{(\beta)}=M_{n-1}^{(\beta)}$ 
if $b_{n-1}=0$, then
\begin{multline}\label{0vers1}
\q_{0}^{(\beta)}(b_{n}=1\mid b_{n-1}=0,\gga_{n-2})
=\frac{\e_{0}\left\{\e_{0}\left[\mathds{1}_{\{b_{n}=1\}}\mathds{1}_{\{b_{n-1}=0\}}\mathds{1}_{\gga_{n-2}}
M_{n-1}^{(\beta)}\mid\cf_{n-1}\right]\right\}}
{\e_{0}\left[\mathds{1}_{\{b_{n-1}=0\}}\mathds{1}_{\gga_{n-2}}M_{n-1}^{(\beta)}\right]}\\
=\e_{0}\left[\mathds{1}_{\{b_{n}=1\}}\mid\cf_{n-1}\right]=\frac{1}{2}.
\end{multline}
The same kind of computations can be performed with $\Gamma_{n-2}=\Omega$, which gives 
\begin{equation}\label{Markov:0vers1}
\q_{0}^{(\beta)}(b_{n}=1\mid b_{n-1}=0,\gga_{n-2})
=\q_{0}^{(\beta)}(b_{n}=1\mid b_{n-1}=0).
\end{equation} 
Second, assume that $r>1$ in (\ref{new:markov}). In this case, invoking (\ref{logmartingale}) 
we have
%the fact that $|b_{n}|=|b_{n-1}|+\xi_{n}=b_{n-1}+\xi_{1}\circ\theta_{n}$ 
\begin{multline}\label{versdroite}
\q_{0}^{(\beta)}(b_{n}=r\mid b_{n-1}=r-1,\gga_{n-2})\\
=\frac{\e_{0}\left\{\e_{0}\left[\mathds{1}_{\{b_{n}=r\}}\mathds{1}_{\{b_{n-1}=r-1\}}\mathds{1}_{\gga_{n-2}}
M_{n-1}^{(\beta)}e^{-\xi_{n}c_{+}(\beta)-c_{-}(\beta)}\mid\cf_{n-1}\right]\right\}}
{\e_{0}\left[\mathds{1}_{\{b_{n-1}=r-1\}}\mathds{1}_{\gga_{n-2}}M_{n-1}^{(\beta)}\right]}\\
=\frac{\e_{0}\left\{\mathds{1}_{\{b_{n-1}=r-1\}}\mathds{1}_{\gga_{n-2}}
M_{n-1}^{(\beta)}\e_{0}\left[\mathds{1}_{\{b_{n}=r\}}e^{-\xi_{n}c_{+}(\beta)-c_{-}(\beta)}\mid\cf_{n-1}\right]
\right\}}{\e_{0}\left[\mathds{1}_{\{b_{n-1}=r-1\}}\mathds{1}_{\gga_{n-2}}M_{n-1}^{(\beta)}\right]}\\
=\e_{r-1}\left[\mathds{1}_{\{b_{1}=r\}}e^{-\xi_{1}c_{+}(\beta)-c_{-}(\beta)}\right]
=\frac{1}{2}e^{-(c_{+}(\beta)+c_{-}(\beta))}=\frac{1}{2}e^{-\beta}.
\end{multline}
Again, we can get that 
\begin{equation}\label{Markov:versdroite}
\q_{0}^{(\beta)}(b_{n}=r\mid b_{n-1}=r-1,\gga_{n-2})
=\q_{0}^{(\beta)}(b_{n}=r\mid b_{n-1}=r-1).
\end{equation}
Hence (\ref{Markov:0vers1}) and (\ref{Markov:versdroite}) prove the Markovian 
feature of the process $\{b_{n};n\geq 1\}$ under $\q_{0}^{(\beta)}$, while (\ref{0vers1}) 
and (\ref{versdroite}) prove the first equalities in 
(\ref{new:naiss}) and (\ref{new:0}). The other equalities can be obtained in a similar way.

\vspace{0.2cm}

\noindent
b) We can write 
\begin{equation*}
\q_{0}^{(\beta)}(\tau_{0}=2k)
=\e_{0}\left[\mathds{1}_{\{\tau_{0}=2k\}}M_{2k}^{(\beta)}\right]
=e^{\beta-2kc_{-}(\beta)}\p_{0}(\tau_{0}=2k)
=e^{\beta-k\mbox{\small\tt F}(\beta)}\p_{0}(\tau_{0}=2k),
\end{equation*}
where we used (\ref{martingale}) and the fact that $2c_{-}(\beta)=\mbox{\small\tt F}(\beta)$. 
Clearly, the latter equality defines a probability measure 
since, thanks to (\ref{equa:F}),
\begin{equation*}
\sum_{k\geq 1}e^{\beta-k\mbox{\small\tt F}(\beta)}\p_{0}(\tau_{0}=2k)
=e^{\beta}\e_{0}\left[e^{\nicefrac{-\mbox{\small\tt F}(\beta)\tau_{0}}{2}}\right]
=1.
\end{equation*}

Moreover, we can compute the Laplace transform of $\tau_{0}$ 
\begin{multline}\label{new:laplacetau}
\e_{0}^{(\beta)}\left[e^{-\delta\tau_{0}}\right]
=\sum_{k\geq 1}e^{-2\delta k}e^{\beta-2kc_{-}(\beta)}\p_{0}(\tau_{0}=2k)
=e^{\beta}\e_{0}\left[e^{-(\delta+\mbox{\small\tt F}(\beta))\tau_{0}}\right]\\
=\exp\left\{\beta-\arg\cosh\left(e^{\delta+\mbox{\small\tt F}(\beta)}\right)\right\}
=\frac{e^{\beta}}{e^{\delta+\mbox{\small\tt F}(\beta)}
+\left[e^{2(\delta+\mbox{\small\tt F}(\beta))}-1\right]^{\nicefrac{1}{2}}}\\
=e^{\beta}\left\{e^{\delta+\mbox{\small\tt F}(\beta)}-\left[e^{2(\delta+\mbox{\small\tt F}(\beta))}-1\right]^{\nicefrac{1}{2}}\right\}.
\end{multline}
We deduce
\begin{equation}\label{new:esperance}
\e_{0}^{(\beta)}\left[\tau_{0}\right]
=-\frac{d}{d\delta}\e_{0}^{(\beta)}\left[e^{-\delta\tau_{0}}\right]_{\mid\delta=0}
=e^{\beta+\mbox{\small\tt F}(\beta)}
\left\{\frac{1}{\left[1-e^{-2\mbox{\small\tt F}(\beta)}\right]^{\nicefrac{1}{2}}}-1\right\}.
\end{equation}
By (\ref{new:esperance}) we also get that $\lim_{\beta\to\infty}\e_{0}^{(\beta)}\left[\tau_{0}\right]
=1=\lim_{\beta\to\infty}\nicefrac{1}{\mbox{\small\tt F}'(\beta)}$, by using also (\ref{expr:F}), 
while $\e_{0}^{(\beta)}\left[\tau_{0}\right]\neq\nicefrac{1}{\mbox{\small\tt F}'(\beta)}$.
The equivalent (\ref{eq:equiv-mean-tau0}) is also easily deduced from
(\ref{new:esperance}).

\vspace{0.2cm}

\noindent
c) Thanks to the Markov property it is enough to describe the first excursion of $b$ between 0 and 
$\tau_{0}$. For any positive Borel function $f$, we have
\begin{equation*}
\e_{0}^{(\beta)}\left[f(b_{0},\ldots,b_{n})\mid\tau_{0}=n\right]
=\frac{\e_{0}\left[f(b_{0},\ldots,b_{n})\mathds{1}_{\{\tau_{0}=n\}}M_{\tau_{0}}\right]}
{\e_{0}\left[\mathds{1}_{\{\tau_{0}=n\}}M_{\tau_{0}}\right]}.
\end{equation*} 
Since, $M_{\tau_{0}}=e^{\beta-c_{-}(\beta)n}$, if $\tau_{0}=n$, we obtain that 
\begin{equation*}
\e_{0}^{(\beta)}\left[f(b_{0},\ldots,b_{n})\mid\tau_{0}=n\right]
=\e_{0}\left[f(b_{0},\ldots,b_{n})\mid\tau_{0}=n\right].
\end{equation*}
\end{proof}


\begin{thebibliography}{99}

\bibitem{BdH}
E. Bolthausen, F. Den Hollander:
Localisation transition for a polymer near an interface.
{\it Ann. Probab.} {\bf 25} (1997), 1334-1366.


\bibitem{De}
P. Debs, 
{\sl P{\'e}nalisation de la marche al{\'e}atoire standard par une fonction du maximum 
unilat{\`e}re, du temps local en z{\'e}ro et de la longueur des excursions},
Preprint IECN, 2007.

\bibitem{Fe}
W. Feller, 
{\sl An introduction to probability theory and its applications}
vol. 1,  3rd edition, Wiley, New York, 1970.

\bibitem{Gi}
G. Giacomin, 
{\sl Random Polymer Models}
Imperial College Press, World Scientific, London, 2007.

\bibitem{CGZ}
F. Caravenna, G. Giacomin, L. Zambotti:
Sharp asymptotic behavior for wetting models in (1+1)-dimension.
{\it Electronic J. Probab.} {\bf 11} (2006), 345-362.


\bibitem{GT}
G. Giacomin; F. Toninelli,
Smoothing effect of quenched disorder on polymer depinning transitions.
Preprint.

\bibitem{MGO}
C. Monthus, T. Garel, H. Orland:
Copolymer at a selective interface and two dimensional wetting: a grand canonical approach.
{\it Eur. Phys. J. B} {\bf 17} (2000), 121-130.

\bibitem{RVY}
B. Roynette; P. Vallois; M. Yor:
Limiting laws associated with Brownian motion perturbed by its
maximum, minimum and local time. II. {\it Stud. Math. Hungarica} {\bf 43} (2006), 295-360.

\bibitem{RY}
B. Roynette, M. Yor:
Penalising Brownian paths : rigorous results and Meta-theorems, book in progress, 2007.
\end{thebibliography}
\end{document}